\documentclass{article}

\usepackage[english]{babel}
\usepackage{graphicx} 
\usepackage{float} 
\usepackage{fullpage}
\usepackage{amsfonts}
\usepackage{amsmath}
\usepackage{amssymb}
\usepackage{amsbsy}
\usepackage{amsthm}
\usepackage{mathtools}
\usepackage{epsfig}
\usepackage{graphicx}
\usepackage{subfig}
\usepackage{colordvi}
\usepackage{graphics}
\usepackage{color}
\usepackage{cite}
\usepackage{bm}
\usepackage{verbatim} 
\numberwithin{equation}{section}
\usepackage{xcolor}
\usepackage{enumitem}

\newtheorem{thm}{Theorem}[section]

\newtheorem{lemma}[thm]{Lemma}

\newtheorem{alg}[thm]{Algorithm}
\newtheorem{remark}[thm]{Remark}
\newtheorem{assumption}[thm]{Assumption}

\newcommand{\argmin}[1]{\underset{#1}{\operatorname{arg}\,\operatorname{min}}\;}

\usepackage[letterpaper,top=2cm,bottom=2cm,left=3cm,right=3cm,marginparwidth=1.75cm]{geometry}

\usepackage{amsmath}
\usepackage{graphicx}
\usepackage[colorlinks=true, allcolors=blue]{hyperref}

\title{NGMRES convergence analysis and proof of acceleration for contractive and noncontractive iterations}

\author{
Yunhui He\thanks{Department of Mathematics, University of Houston, 3551 Cullen Blvd, Room 641, Houston, Texas 77204-3008, USA (yhe43@central.uh.edu).}
\and 
Leo G Rebholz\thanks{School of Mathematical and Statistical Sciences, Clemson University, Clemson, SC 29634, USA (rebholz@clemson.edu).  The work of LR was partially funded by Department of Energy grant DE-SC0025292.}
\and 
Mengying Xiao\thanks{Department of Mathematics and Statistics, University of West Florida, Pensacola, FL 32514, USA (mxiao@uwf.edu).}
}

\begin{document}
\maketitle

\begin{abstract}
This paper gives the first convergence analysis and proof of acceleration for nonlinear GMRES (NGMRES) applied to contractive and noncontractive fixed point iterations (FPIs) for solving general nonlinear systems.  Our main results are that in both the contractive and noncontractive cases, the ratio gain of the optimization problem used to determine the combination coefficients of  NGMRES update is the mechanism responsible for accelerating (or enabling) convergence.  Our analysis also reveals a second important quantity related to the optimization problem, which directly predicts the linear convergence rate at each iteration.
Numerical results for several challenging nonlinear test problems are presented to  illustrate the theory. The results show how the acceleration improves convergence, show that the quantity predicting the linear convergence rate is remarkably accurate and moreover can be useful for adaptively choosing NGMRES depth, show how restarts can improve convergence in noncontractive iterations, show how NGMRES is naturally suited for finding distinct solutions of a multi-solution PDE, and that NGMRES can perform better than Anderson acceleration when applied to superlinear FPIs.
\end{abstract}

\section{Introduction}

Nonlinear GMRES (NGMRES) was first proposed in 1997 by Washio and Oosterlee as 
a Krylov subspace based acceleration technique for solving difficult nonlinear equations \cite{washio1997krylov}.  Shortly after, they extended the approach to nonlinear multigrid and to solve recirculating incompressible flow problems  \cite{oosterlee2000krylov}.  There have been many works since then using NGMRES to accelerate or enable convergence in nonlinear solvers for many types of application problems, including image restoration \cite{chang2003acceleration,savage2005improved,he2025generalized}, tensor decomposition 
\cite{sterck2012nonlinear, sterck2013steepest, sterck2021asymptotic}, Navier-Stokes equations \cite{HR26}, and CUTEst problems \cite{riseth2019objective}.  NGMRES takes the following form:  for a given fixed point operator $q(x)$ used to solve $g(x)=0$, NGMRES with depth $m$ at iteration $k+1$ can be written as 
\begin{equation}
 				x_{k+1} = q(x_k) + \sum_{i=0}^{m_k}\beta_i^{k+1} \left(q(x_k)- x_{k-i} \right), \label{eq:min-NG1} 
\end{equation}
where $m_k=\min\{k,m\}$, and coefficient $\beta^{k+1}=\big(\beta_0^{k+1},\beta_1^{k+1},\cdots, \beta_{m_k}^{k+1}\big)$ (note superscripts refer to the iteration number) are determined by solving the  least-squares optimization problem
 \begin{equation}\label{eq:min-NG2} 
 				\min_{\beta^{k+1}} \| g(q(x_k))+\sum_{i=0}^{m_k} \beta_i^{k+1} \left(g(q(x_k))-g(x_{k-i}) \right)\|^2.
 \end{equation}
NGMRES can equivalently be written in a constrained optimization form, which is more natural for our analysis herein:
\[
 				x_{k+1} = \alpha^{k+1}_{k+1} q(x_k) + \sum_{j=k-m_k}^k \alpha^{k+1}_i x_i,
\]
\[
\mbox{ where }
 				\alpha^{k+1} = \argmin{\sum_{i=k-m_k}^{k+1} \alpha_i^{k+1} = 1} 
				\| \alpha^{k+1}_{k+1} g(q(x_k))+ \sum_{j=k-m_k}^k \alpha^{k+1}_i g(x_i) \|^2.
\]
While the norm historically used in the NGMRES optimization problems above is most often $\ell^2$, the convergence theory in this paper requires it to be the range norm of $g$.  Numerical tests in this paper and \cite{HR26} show that while sometimes similar convergence behavior is observed with these two norms, other times using the range norm of $g$ produces significantly better results.

A convergence theory and proof of acceleration for NGMRES was recently established in \cite{HR26} for NGMRES applied to the specific problem of solving the steady Navier-Stokes equations (NSE) with the Picard iteration, and under a small data assumption 
to enforce that Picard is contractive with rate $\kappa_{nsepic}<1$ for both the fixed point residuals and the nonlinear residuals \cite{HR26}.  To our knowledge, this was the first work to mathematically prove that NGMRES accelerates convergence, albeit for a specific contractive iteration and nonlinear system.

The purpose of this paper is to generalize and extend the NSE/Picard-specific results to general nonlinear systems $g=0$ where $g:H\rightarrow H'$ (where $H$ is a Hilbert space and $H'$ is the dual space of $H$) with associated fixed point iteration function $q:H\rightarrow H$.  We provide herein a complete single step convergence analysis for NGMRES applied to FPIs for nonlinear systems, revealing {\it how} it accelerates and even enables convergence.  Our theory covers both the contractive and noncontractive cases, as well as general depths $m\ge 0$.  

The results we find, under some reasonable smoothness assumptions on $g$ and $q$ (given in section 2) and denoting convergence ratio of the nonlinear residuals for the unaccelerated FPI to be $\kappa$ (which the theory allows to be greater than 1), take the form:
\begin{align}
\| g(x_{k+1}) \|_{H'} & \le \min\{ \gamma_{k+1} \kappa,\theta_{k+1} \} \| g(x_{k})   \|_{H'} + \mbox{ higher order terms}, 
 \end{align}
where $\theta_{k+1}$ and $\gamma_{k+1}$ are defined by
\begin{align*}
 \theta_{k+1} & := \frac{ \| \alpha^{k+1}_{k+1} g( q(x_k)) + \alpha^{k+1}_k g(x_k) + ... + \alpha^{k+1}_{k-m}g(x_{k-m}) \|_{H'}  }{ \| g(x_k) \|_{H'} }, \\
 \gamma_{k+1}& := \frac{ \| \alpha^{k+1}_{k+1} g(q(x_k) ) + \alpha^{k+1}_k g(x_k) + ... + \alpha^{k+1}_{k-m}g(x_{k-m}) \|_{H'}  }{ \| g(q(x_k)) \|_{H'} }.
 \end{align*}
 Note that both $\theta_{k+1}\le 1$ and $\gamma_{k+1}\le 1$ due to the structure of the optimization problem, and are only equal to one in the special case that the (constrained form) optimization problem returns $(0,1,0,...,0)$ or $(1,0,...,0)$, respectively. The only difference between $\theta_{k+1}$ and $\gamma_{k+1}$ is the denominator in their respective definitions.  The term $\gamma_{k+1}$ is observed to be the `gain of the optimization problem,' as it is the ratio of what the optimization objective function gives to what a FPI with no acceleration gives.   A key point is that $\gamma_{k+1}$ scales the linear convergence rate $\kappa$ of the nonlinear residuals to $\gamma_{k+1}\kappa$, which identifies the gain of the optimization problem as the mechanism responsible for acceleration by NGMRES.  Hence up to higher order terms, this result shows that NGMRES accelerates convergence precisely through the gain of the optimization problem.  The term $\theta_{k+1}$  in the convergence results directly approximates the linear convergence rate and moreover, since $\theta_{k+1}\le 1$ (and only equals 1 in very special cases), it establishes convergence of NGMRES, up to the effect of higher order terms.  Numerical tests in Section 4 show that $\theta_{k+1}$ is a remarkably accurate predictor of the convergence rate when the higher order terms are negligible, and thus it can be used to determine when the nonlinear contributions are negligible and thus determine an adaptive depth strategy to speed up convergence even more. 

To our knowledge, these are the first theoretical results for NGMRES that prove acceleration and identify the mechanism responsible for it, for both contractive and noncontractive iterations, for any depth, and for general nonlinear systems.  Furthermore, we show that $\theta_{k+1}$ is a very accurate approximation of the nonlinear residuals' linear convergence rate and that it can be used to develop an adaptive depth strategy, how restarts can significantly improve convergence in the noncontractive case, and that better convergence can be obtained if the optimization norm matches that of the theory.  Existing theory for NGMRES is all quite recent, and includes important results from  \cite{he2025convergence} where it is established the NGMRES converges $q$-linearly (and $r$-linearly when $m=0$). Moreover, full depth NGMRES applied to the Richardson iteration is equivalent to classical GMRES \cite{greif2026convergence}, an extension of this result to preconditioned Richardson and preconditioned GMRES in  \cite{he2026ngmresprecon}, and convergence analysis for alternating NGMRES is given for linear systems in \cite{he2026convergenceANG}.  Hence we believe the results herein establishing NGMRES acceleration for general nonlinear systems and associated contractive and noncontractive iterations help to fill a gap in NGMRES theory.

NGMRES is related to the recently popularized Anderson acceleration (AA) \cite{Anderson65,PR25} method in that it has a similar extrapolation that defines the next iterate, but it significantly differs in that the optimization problem of NGMRES uses nonlinear residuals while AA uses fixed point residuals.  Although for many problems AA and NGMRES provide similar improvement to convergence behavior \cite{HR26b,sterck2021asymptotic}, NGMRES can outperform AA in highly anisotropic problems, steep optimization landscapes, and systems with sharp discontinuities \cite{sterck2012nonlinear,sterck2021asymptotic,he2025generalized}.  Still, comparisons in the literature are scarce and it is not yet possible to draw any strong conclusions.  It is an open question to classify what types of applications and iterations will have one of AA or NGMRES significantly outperform the other.  We partially address this question herein, by showing that NGMRES is better suited for use with superlinearly converging iterations.  

We note that convergence results for AA applied to general FPIs were established in \cite{PR21,PR25} that 
are similar to NGMRES results herein as they show the gain of the AA optimization problem is the mechanism responsible for the acceleration provided by AA, and AA adds higher order terms to the fixed point residual expansion.  A key difference compared to NGMRES results is that AA theory is fixed-point residual based, while NMGRES theory is based on the true nonlinear residuals.  Another key difference in our NGMRES theory is that for equal depths, higher order terms for AA are lower order than for NGMRES; in particular for depth 1, NGMRES has quadratic higher order terms while AA's higher order terms are of order 1.62 \cite{RX23}.  Hence NGMRES appears better suited for use with superlinearly converging methods, and we give a numerical test that illustrates this.  Yet another difference in AA and NGMRES theories is the simplicity of the NGMRES proofs: while the proofs that establish AA theory are rather long and technical \cite{PR25,PR21,EPRX20,PRX19, X23, RX23}, the NGMRES convergence and acceleration theory herein is much shorter and straight-forward.

For the choice of the optimization norm, we find analogous results as found for AA in \cite{HR25}: {\it using the correct norm in the optimization problem matters}.  For many years,  $\ell^2$ was the most common choice for AA and NGMRES optimization norms since it was efficient and simple to use, and there was no theory yet that suggested to use a different norm.  However, this is no longer true.  The NGMRES theory herein requires the range norm of $g$ be used for the optimization problem, and tests herein and in \cite{HR26} show that using $\ell^2$ instead can give worse (or no) convergence. AA theory relies on the Hilbert space norm associated with $q$ being used for the AA optimization problem, and in \cite{HR25} it is shown that using this norm instead of $\ell^2$ can provide better convergence results.  

This paper is arranged as follows.  In section 2 we give some mathematical preliminaries to set the notation, present assumptions, and recall known results to allow for a smooth analysis to follow.  Section 3 gives a convergence analysis for NGMRES for contractive and noncontractive fixed point iterations, and also shows how NGMRES provides acceleration.  Section 4 gives results of several numerical tests which illustrate the new theory and show the effectiveness of the method.

\section{Mathematical Preliminaries}
We now present notation and mathematical preliminaries to be used in the sections to follow.  Let  $(H,\|\cdot \|_H)$ be a Hilbert space with inner product $(\cdot,\cdot)$, and $(H', \|\cdot \|_{H'})$ be the dual space of $H$ with $\displaystyle \|y\|_{H'} = \sup\limits_{0\neq x\in H} \frac{\langle y,x\rangle}{\|x\|}$, where $\langle \cdot,\cdot \rangle$ represents the duality pairing. 

This paper is concerned with solving nonlinear systems of the form $g(x)=0$ for $g:H\rightarrow H'$ and $f\in H'$, where $x^*\in H$ denotes a root of  $g(x)=0$.  The associated fixed point function to find the root is denoted by $q:H\rightarrow H$, and the goal of NGMRES is to accelerate convergence of the FPI $x_{k+1}=q(x_k)$, or even enable convergence when the FPI fails to converge.  We make the following regularity assumptions on $g$ and $q$, and fix notation for the associated constants.
Together, these assumptions amount to requiring $g$ to be Lipschitz continuously differentiable on $H$, and $q$ to be continuous and satisfy that the fixed point residuals cannot grow beyond the nonlinear residuals without bound.  Despite the weakened assumptions on $q$ relative to $g$,  many fixed point functions $q$ are derived from $g$ (e.g. $q(x)=g(x)+x$) so the assumed regularity on $g$ is  inherited by $q$.  
A key feature of our analysis is that it covers both the contractive and noncontractive cases.

\begin{assumption}
\label{assum1}
    Let $g:  H  \to H'$ be a nonlinear operator, $x^*$ solve $g(x) =0$, and $x^*$ be a fixed point of $q:H\rightarrow H$.
        We make the following assumptions: 
    \begin{enumerate}
        \item $g$ is Lipschitz continuously Fr\'echet differentiable:
        \begin{enumerate}
        \item $g':H\times H \to H'$ satisfies, for $x,h\in H$, 
        \begin{align*}
      \lim\limits_{\|h\|_H \to 0}  \frac{\| g(x+h) - g(x) - g'(x;h)\|_{H'}}{\|h\|} =0.
    \end{align*} 
	\item There exists $\sigma_1>0$ satisfying for all $x,y,h\in H$,
    \begin{align}
      \| g'(x;h)- g'(y;h)\|_{H'} \le& \sigma_1 \|x-y\|_{H}\|h\|_{H}.
      \label{gineq1}
    \end{align}
 \end{enumerate}
 
     \item  $q$ is continuous and there exists a constant $\kappa_0>0$ such that for all $x\in H$,
    \begin{align}
    \label{qbd1}
         \|  q(x) - x \|_{H} \le \kappa_0 \| g(x)\|_{H'}.
    \end{align}
    \item  There exists a constant $\kappa_1>0$ such that for all $x\in H$,
    \begin{align}
    \label{qbd2}
         \|  g(q(x)) \|_{H'} \le \kappa_1 \| g(x)\|_{H'}.
    \end{align}
    \end{enumerate}  
\end{assumption}

\begin{remark}
The Picard iteration for the steady NSE studied in \cite{PRX19,HR26} satisfies these conditions if $q$ is the Picard iteration solution operator and $g$ is the nonlinear residual operator, see \cite{HR26}.  For a simple root, one can in general expect that Assumptions 2.1.2 and 2.1.3 will hold locally.  That is, if a FPI converges, then the nonlinear residuals of those iterates should converge as well, and vice versa.
\end{remark}

\begin{remark}
Assumption 2.1.2 enforces that the fixed point residuals must converge to zero if the nonlinear residuals do.  Equation \eqref{qbd2} establishes $\kappa_1$ as the convergence ratio of the nonlinear residuals in the case of no acceleration.  
\end{remark}

Note that since $g$ is Fr\'echet differentiable, by definition $g'$ is linear with respect to the second argument.
Consequently we have
   \begin{align}
   \label{dglinear}
       g'(\cdot; t_1h_1+ t_2h_2) = t_1g'(\cdot ; h_1) + t_2 g'(\cdot, h_2),\ \  \forall t_1,t_2\in \mathbb{R}, h_1,h_2\in H.
   \end{align} 
 
We now recall the depth $m$ NGMRES algorithm
for accelerating a fixed-point iteration defined by $q(x)$ that is  used to solve $g(x)=0$.  Here we use the constrained form of the optimization problem, which is equivalent to the more commonly used formulation \eqref{eq:min-NG1}-\eqref{eq:min-NG2}.  
 
\begin{alg}[NGMRES]
\label{alg:ngmres}
Consider NGMRES at Step $k+1$ with $k\geq 0$ with depth $m=\min \{ m_{max},k \}$, and $x_0\in H$ to be an initial guess. Then
\begin{enumerate}
    \item [1.] Find the coefficients $(\alpha^{k+1}_{k+1}, \alpha^{k+1}_{k}, \cdots, \alpha^{k+1}_{k-m})$ satisfying 
    \begin{align}
       \min\limits_{\sum\limits_{i=k-m}^{k+1} \alpha_i^{k+1} =1} \left\| \alpha_{k+1}^{k+1} g(q(x_k)) + \sum\limits_{j=k-m}^k \alpha^{k+1}_j g(x_j) \right\|^2_{H'}.
       \label{minstep}
    \end{align}
    \item [2.] Set $x_{k+1} =\alpha_{k+1}^{k+1} q(x_k) + \sum\limits_{j=k-m}^k \alpha^{k+1}_j x_j $.
\end{enumerate}
\end{alg}
The depth $m$ can vary at each step, and $m$ can be no larger than $k$.  While it is common to use a constant depth $m$ after the initial $m$ iterations, we discuss an adaptive strategy for dynamically picking $m$ in Section 4.

While historically the $\ell^2$ norm is commonly used for the optimization norm, our results herein rely on using the $H'$ norm.  Moreover, in \cite{HR26} it is shown that for $g$ representing the NSE and $q$ the associated Picard iteration, using the dual norm for the optimization problem gives much better results compared to using $\ell^2$ for certain problems, in particular in 3D.

We make the following assumption on the uniform boundedness of the $\{ \alpha_j^k \}$ coefficients. 
\begin{assumption}
\label{assum3}
    We assume $\sum\limits_{j=k-m}^{k+1} |\alpha_j^{k+1}|\le \bar \alpha$ for some $\bar\alpha>0$.
\end{assumption}
This assumption is implied by the optimization problems being (uniformly numerically) well-posed.  
Consequently, we have \begin{align}
1 = \sum_{j=k-m}^{k+1} \alpha_j^{k+1} \le \sum_{j=k-m}^{k+1} |\alpha_j^{k+1}| \le \bar\alpha.
\label{alphabd}
\end{align}
Note that this can be checked on the fly, and $m$ can be reduced or filtering can be performed if $\bar\alpha$ is deemed too large. Further details and strategies for enforcing this assumption can be found in \cite{PR23,PR25,K18}.

We define the following important quantities, which arise in our convergence analysis.
\begin{align}
\label{eqn:theta}
    \theta^m_{k+1}\coloneqq \frac{\left\|\alpha_{k+1}^{k+1} g(q(x_k)) + \sum\limits_{j=k-m}^k \alpha^{k+1}_j g(x_j) \right\|_{H'} }{\|g(x_k)\|_{H'}}, \\
    \label{eqn:gamma}
    \gamma^m_{k+1}  \coloneqq \frac{\left\|\alpha_{k+1}^{k+1} g(q(x_k)) + \sum\limits_{j=k-m}^k \alpha^{k+1}_j g(x_j) \right\|_{H'}}{\|g(q(x_k))\|_{H'}}
\end{align}
From  \eqref{minstep}, it holds that $0\le \gamma^m_{k+1},\theta_{k+1} \le 1$.  Observe that the numerator of $\gamma^m_{k+1}$ is the optimization objective function evaluated at the minimizer $\alpha^{k+1}=\{ \alpha^{k+1}_i \}_{i=k-m}^{k+1}$, and the denominator is the optimization objective function evaluated at $(1,0,...,0)$.  
Hence $\gamma^m_{k+1}=1$ only
when $\alpha^{k+1}=(1,0,...,0)$ solves the optimization problem, and so except in these very rare instances it holds that  $0\le \gamma^m_{k+1}<1$ (and our numerical tests illustrate this).  Similarly, $\theta^m_{k+1} = 1$ only if $\alpha^{k+1}=(0,1,0,...,0)$.  Thus, we can generally expect $\gamma^m_{k+1},\theta_{k+1} < 1$ except in diabolical cases.  Note that if one does not perform NGMRES and simply does a fixed point iteration, this is equivalent to running NGMRES with optimization coefficient $\alpha^{k+1}=(1,0,...,0)$.  Hence we refer to $\gamma^m_{k+1}$ as the ratio `gain of the optimization problem' at Step $k+1$.

\section{Convergence}

We now present a new convergence theory for NGMRES.  This is a single step analysis, and we set $m=\min \{m_{max},k \}$.  We first consider the base case of $m=0$, and prove a convergence result that holds for contractive and noncontractive fixed point iterations.  We then prove an analogous result for general $m\ge 1$ in the contractive case, and finally a result for $m\ge 1$ in the noncontractive case.  In each case the main result is that NGMRES scales the linear convergence rate of the nonlinear residuals (i.e. $\kappa_1$) by the gain of the optimization problem $\gamma^m_{k+1}$, and $\theta^m_{k+1} \le 1$ directly predicts the NGMRES nonlinear residuals' linear convergence rate (up to higher order terms). 

\subsection{NGMRES convergence theory for $m=0$}

Consider Algorithm \ref{alg:ngmres} with $m=0$, which reduces to the following algorithm.
\begin{alg}[NGMRES $m=0$]
\label{alg:ngmres0}
Let $x_0$ be an initial guess in $H$.  Then at Step $k+1$,
\begin{enumerate}
    \item[1.] Find $\alpha^{k+1}$ such that 
    \begin{align}
        \min\limits_{\alpha^{k+1}} \left\| \alpha^{k+1} g(q(x_k)) + (1-\alpha^{k+1}) g(x_k)\right\|_{H'}.
    \end{align}
    \item[2.] Set $x_{k+1} = \alpha^{k+1} q(x_k) + (1-\alpha^{k+1})x_k$.
\end{enumerate}
\end{alg}

\begin{thm}[NGMRES convergence with depth $m=0$]
\label{thm:ngmres0}
    Let Assumptions \ref{assum1} and \ref{assum3} hold. Then iterate $x_{k+1}$ from Algorithm \ref{alg:ngmres0} satisfies
     \begin{align}
     \label{eq:gx10}
        \| g(x_{k+1})\|_{H'} \le \theta^0_{k+1} \|g(x_k)\|_{H'} + \mathcal{O}(\|g(x_k)\|_{H'}^2), \\
         \| g(x_{k+1})\|_{H'} \le \gamma^0_{k+1} \kappa_1 \|g(x_k)\|_{H'} + \mathcal{O}(\|g(x_k)\|_{H'}^2).
         \label{eq:gx20}
    \end{align}
\end{thm}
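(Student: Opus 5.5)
The plan is to write $g(x_{k+1})$ as the optimized combination $\alpha^{k+1} g(q(x_k)) + (1-\alpha^{k+1}) g(x_k)$ plus a remainder that is quadratic in $\|q(x_k)-x_k\|_H$. Set $w_k := q(x_k)-x_k$, so that $x_{k+1} = x_k + \alpha^{k+1} w_k$ and $q(x_k) = x_k + w_k$. The fundamental theorem of calculus along segments, which is justified by the Fr\'echet differentiability and the Lipschitz continuity of $g'$, gives
\begin{align*}
g(x_{k+1}) - g(x_k) &= \int_0^1 g'(x_k + s\alpha^{k+1} w_k;\, \alpha^{k+1} w_k)\,ds,\\
g(q(x_k)) - g(x_k) &= \int_0^1 g'(x_k + s w_k;\, w_k)\,ds .
\end{align*}
Multiply the second identity by $\alpha^{k+1}$ and use linearity \eqref{dglinear} in the second argument. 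This yields
\[
g(x_{k+1}) = \alpha^{k+1} g(q(x_k)) + (1-\alpha^{k+1}) g(x_k) + \alpha^{k+1}\int_0^1 \big[g'(x_k+s\alpha^{k+1}w_k; w_k) - g'(x_k+s w_k; w_k)\big]\,ds .
\]

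Next I bound the remainder with \eqref{gineq1}. The integrand is at most $\sigma_1 |s|\,|\alpha^{k+1}-1|\,\|w_k\|_H^2$, so the whole term is bounded by
\[
\tfrac{\sigma_1}{2}|\alpha^{k+1}|\,|1-\alpha^{k+1}|\,\|w_k\|_H^2 .
\]
By Assumption \ref{assum3}, $|\alpha^{k+1}| + |1-\alpha^{k+1}| \le \bar\alpha$, so this is at most $\tfrac{\sigma_1}{2}\bar\alpha^2 \|w_k\|_H^2$. By \eqref{qbd1} we have $\|w_k\|_H \le \kappa_0\|g(x_k)\|_{H'}$, and the remainder becomes $\tfrac{\sigma_1\bar\alpha^2\kappa_0^2}{2}\|g(x_k)\|_{H'}^2 = \mathcal{O}(\|g(x_k)\|_{H'}^2)$.

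Then I take $H'$ norms and apply the triangle inequality. The norm of the optimized combination equals $\theta^0_{k+1}\|g(x_k)\|_{H'}$ by definition \eqref{eqn:theta}, which gives \eqref{eq:gx10}. The same norm also equals $\gamma^0_{k+1}\|g(q(x_k))\|_{H'}$ by \eqref{eqn:gamma}, and \eqref{qbd2} gives $\|g(q(x_k))\|_{H'} \le \kappa_1\|g(x_k)\|_{H'}$, which yields \eqref{eq:gx20}. If $g(q(x_k))=0$, then $\gamma^0_{k+1}$ is undefined; in that case the minimizer gives zero objective and the bound holds trivially.

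The main obstacle is the remainder estimate. The point is to organize the expansion so that the first-order terms cancel exactly, leaving only a difference of derivatives. The Lipschitz bound then supplies the factor $\|w_k\|$, and \eqref{qbd1} converts $\|w_k\|$ into nonlinear residuals. Writing both expansions about the same base point $x_k$ is what makes this cancellation clean.
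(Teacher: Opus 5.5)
Your proof is correct and follows essentially the same route as the paper: you write $g(x_{k+1})$ minus the optimized combination as a difference of two segment integrals of $g'$ in the common direction $q(x_k)-x_k$, so the first-order terms cancel by linearity, and then you apply \eqref{gineq1} and \eqref{qbd1}. The differences are cosmetic. You expand both differences about $x_k$, whereas the paper expands about $x_{k+1}$, and both arrive at the same remainder $\tfrac{\sigma_1}{2}|\alpha^{k+1}(1-\alpha^{k+1})|\,\|q(x_k)-x_k\|_H^2$; you then bound it slightly more tightly, with $\bar\alpha^2$ in place of the paper's $(1+\bar\alpha)\bar\alpha$.
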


We recall the standard mean-value/Fundamental Theorem of Calculus formula along the segment from $a$ to $b$:  
$$f(b)-f(a)=\int_0^1 f'(a+t(b-a);b-a)\,dt=\int_0^1 f'(b+t(a-b);b-a)\,dt,$$
which is very useful for our proofs in the following.

\begin{remark} The `gain of the optimization problem'  $\gamma^0_{k+1}$ is shown in the theorem to scale the linear convergence rate $\kappa_1$ of the nonlinear residuals.  Since it is expected that $\gamma^0_{k+1} < 1$ except in diabolical cases where it can equal 1, $\gamma^0_{k+1}$ represents the mechanism responsible for NGMRES convergence acceleration.
\end{remark}
\begin{remark}
The term $\theta_{k+1}^0$ is shown in the theorem to approximate the linear convergence rate of NGMRES nonlinear residuals.  Our numerical tests indicate that it is remarkably accurate in this regard, once the nonlinear residuals are sufficiently small so that higher order terms are negligible.
\end{remark}
\begin{remark}
The higher order terms in the nonlinear residual bounds are quadratic, suggesting that using NGMRES with convergence order
$1<r\le 2$ convergent fixed-point iterations will preserve the order.  This is in contrast to depth 1 AA, where the higher order terms are less than quadratic and can decelerate superlinearly convergence \cite{X23,RX23}.  This feature of NGMRES (over AA) is illustrated in Section 4.4.
\end{remark}

\begin{remark}
The results of the above theorem cover the results established in \cite{HR26}, where NGMRES is applied to solve the steady NSE with the Picard iteration.
\end{remark}

\begin{proof}
Expanding $x_{k+1}$, we can write 
\begin{align*}
    x_{k+1} - q(x_k) =& \alpha^{k+1} q(x_k) + (1-\alpha^{k+1})x_k - q(x_k)
    =  -(1-\alpha^{k+1})(q(x_k) - x_k), \\
    x_{k+1} - x_k = & \alpha^{k+1} q(x_k) + (1-\alpha^{k+1})x_k - x_k
    = \alpha^{k+1} (q(x_k) - x_k).
\end{align*}
By the triangle inequality and definition of $\theta_{k+1}^0$, we have
\begin{align*}
   \| g(x_{k+1}) \|_{H'}
    \le  & \| \alpha^{k+1} g( q(x_k))  + (1-\alpha^{k+1}) g(x_k) \|_{H'} + \bigg\| g(x_{k+1}) - \alpha^{k+1} g( q(x_k)) - (1-\alpha^{k+1})g(x_k) \bigg\|_{H'} 
    \\
    = & \theta_{k+1}^0 \| g(x_k)\|_{H'}+ \bigg\| \alpha^{k+1} \left( g(x_{k+1}) -  g( q(x_k)) \right)
    + (1-\alpha^{k+1}) \left( g(x_{k+1})- g(x_k)  \right)  \bigg\|_{H'} .
\end{align*}
It remains to bound the last term to verify \eqref{eq:gx10}. Applying Taylor expansion, the expanding $x_{k+1}$ results above, and \eqref{dglinear} of $g'$ produces
\begin{align*}
   \| & \alpha^{k+1}  \left( g(x_{k+1}) -  g( q(x_k)) \right)
    + (1-\alpha^{k+1}) \left( g(x_{k+1})- g(x_k)  \right)  \|_{H'}
    \\
       & = \bigg\| \alpha^{k+1} \int_0^1 g'(z_{k_1}(t);  x_{k+1}-q(x_k) ) dt 
    + (1-\alpha^{k+1}) \int_0^1 g'( z_{k_2}(t); x_{k+1}-x_k ) dt \bigg\|_{H'}
    \\
    & = \bigg\| \alpha^{k+1} \int_0^1 g'(z_{k_1}(t);  -(1-\alpha^{k+1})(q(x_k)-x_k)) dt 
    + (1-\alpha^{k+1}) \int_0^1 g'( z_{k_2}(t); \alpha^{k+1}(q(x_k)-x_k) ) dt \bigg\|_{H'}
    \\
    & =| \alpha^{k+1}(1-\alpha^{k+1}) | \left\| \int_0^1\bigg( g'(z_{k_2}(t); q(x_k)-x_k)  - g'(z_{k_1}(t); q(x_k)-x_k)    \bigg)  dt  \right\|_{H'},
       \end{align*}
       where $z_{k_1}(t) = x_{k+1} - t(x_{k+1} - q(x_k)) , z_{k_2}(t) = x_{k+1} - t (x_{k+1} - x_k) $.
       Then applying \eqref{gineq1}, \eqref{qbd1}, Assumption \ref{assum3} and \eqref{eqn:theta}, we obtain
        \begin{align*}
    | \alpha^{k+1}(1-\alpha^{k+1})& | \left\| \int_0^1\bigg( g'(z_{k_2}(t); q(x_k)-x_k)  - g'(z_{k_1}(t); q(x_k)-x_k)    \bigg)  dt  \right\|_{H'}
  \\
    \le  & (1+\bar \alpha ) \bar \alpha  \int_0^1\left\| g'(z_{k_2}(t); q(x_k)-x_k)  - g'(z_{k_1}(t); q(x_k)-x_k)  \right\|_{H'} \ dt
    \\
    \le &  (1+\bar \alpha ) \bar \alpha \sigma_1  \int_0^1  \|z_{k_2}(t) - z_{k_1}(t)\|_{H}\|q(x_k)-x_k\|_{H}  dt
    \\
    \le &  (1+\bar \alpha ) \bar \alpha \sigma_1 \|q(x_k)- x_k\|_{H}^2 \int_0^1 |t| dt \\
    \le &  \frac12 (1+\bar \alpha ) \bar \alpha \sigma_1 \kappa_0^2 \|g(x_k)\|_{H'}^2.
\end{align*}
This proves \eqref{eq:gx10}.  

 For \eqref{eq:gx20}, we utilize the same techniques as above along with \eqref{qbd2} in \eqref{eq:gx10} as
  \begin{align*}
      \| \alpha^{k+1} g(q(x_k)) + (1-\alpha^{k+1})g(x_k) \|_{H'} = \gamma^0_{k+1} \| g(q(x_k))\|_{H'} \le \gamma_{k+1}^0 \kappa_1 \| g(x_k)\|_{H'}.
  \end{align*} 
  This completes the proof.
\end{proof}

\subsection{NGMRES convergence theory for general $m$: the contractive case}

We now extend the NGMRES convergence and acceleration results to the case of general $m$.  Here, we split the analysis into two cases: contractive and noncontractive.  While for $m=0$ our results hold for both the contractive and noncontractive fixed-point iterations, for $m\ge 1$ the structure of NGMRES leads us to different analyses for these cases.  We now give results for the contractive case, after a preliminary lemma.

\begin{lemma} 
\label{lemma0}
Suppose $q$ is Lipschitz continuous on $H$ with Lipschitz constant $\kappa_q<1$.  Then for $i,j = k-m, k-m+1, \dots, k$ and $i>j$, we have
    \begin{align*}
        \| q(x_k) - x_j\|_{H} \le& \kappa_0\|g(x_k)\|_{H'} + \frac{\kappa_0}{1-\kappa_q}\sum\limits_{n=j}^{k}\|g(x_{n})\|_{H'},\\
        \|x_j - x_i\|_{H} \le&  \frac{\kappa_0}{1-\kappa_q}\sum\limits_{n=j}^{i}\|g(x_{n})\|_{H'}.
    \end{align*}
\end{lemma}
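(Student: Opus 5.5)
The plan is to derive both inequalities from one two-point estimate. That estimate combines the contraction property of $q$ with \eqref{qbd1}, and in fact needs no telescoping along the sequence of iterates. The only inputs are \eqref{qbd1} from Assumption~\ref{assum1} and the Lipschitz bound $\|q(x)-q(y)\|_H\le \kappa_q\|x-y\|_H$ with $\kappa_q<1$.

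\textbf{Step 1 (two-point estimate).} For arbitrary $x,y\in H$, insert $q(x)$ and $q(y)$ and apply the triangle inequality:
\begin{align*}
\|x-y\|_H &\le \|x-q(x)\|_H + \|q(x)-q(y)\|_H + \|q(y)-y\|_H \\
&\le \|x-q(x)\|_H + \kappa_q\|x-y\|_H + \|q(y)-y\|_H .
\end{align*}
Since $\kappa_q<1$, the $\kappa_q\|x-y\|_H$ term can be absorbed into the left-hand side. Applying \eqref{qbd1} to both fixed point residuals then gives
\[
\|x-y\|_H \le \frac{\kappa_0}{1-\kappa_q}\big(\|g(x)\|_{H'}+\|g(y)\|_{H'}\big).
\]

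\textbf{Step 2 (second inequality).} Take $x=x_j$ and $y=x_i$ with $j<i$. The right-hand side is then $\frac{\kappa_0}{1-\kappa_q}(\|g(x_j)\|_{H'}+\|g(x_i)\|_{H'})$. Both of these terms appear in $\sum_{n=j}^{i}\|g(x_n)\|_{H'}$ and every summand is nonnegative, so the stated bound follows.

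\textbf{Step 3 (first inequality).} Write
\[
\|q(x_k)-x_j\|_H\le \|q(x_k)-x_k\|_H+\|x_k-x_j\|_H .
\]
Bound the first term by $\kappa_0\|g(x_k)\|_{H'}$ using \eqref{qbd1}. For $j<k$, bound the second term by Step 1 with $x=x_k$, $y=x_j$; this gives $\frac{\kappa_0}{1-\kappa_q}(\|g(x_k)\|_{H'}+\|g(x_j)\|_{H'})$, which is at most $\frac{\kappa_0}{1-\kappa_q}\sum_{n=j}^{k}\|g(x_n)\|_{H'}$. For $j=k$ the second term vanishes, so the bound holds trivially.

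\textbf{Expected obstacle.} There is no genuine obstacle. The one point to check is that the absorption in Step 1 is legitimate, which requires exactly $\kappa_q<1$; this is where the contractive hypothesis is used.

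An alternative would be to telescope $x_j-x_i$ through the iterates. That route is awkward, because the NGMRES iterates are not iterates of $q$: for example, $x_{n+1}-x_n$ is not $q(x_n)-x_n$. The two-point argument avoids this difficulty and yields a bound that is, if anything, sharper than the stated sums.
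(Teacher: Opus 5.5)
Your proof is correct, and it differs from the paper's in a small but genuine way. The paper uses your absorption argument only for consecutive iterates. This gives $\|x_{n+1}-x_n\|_H\le\frac{\kappa_0}{1-\kappa_q}(\|g(x_{n+1})\|_{H'}+\|g(x_n)\|_{H'})$, and the paper then telescopes $\|x_i-x_j\|_H\le\sum_{n=j}^{i-1}\|x_{n+1}-x_n\|_H$. Its first inequality is then handled exactly as in your Step 3. You instead observe that the two-point estimate holds for arbitrary $x,y\in H$, so you apply it to the pair $(x_j,x_i)$ directly.

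This is shorter, and it is also sharper: only the endpoint residuals appear on the right. More importantly, it actually delivers the stated constant. The telescoped sum equals $\|g(x_j)\|_{H'}+2\sum_{n=j+1}^{i-1}\|g(x_n)\|_{H'}+\|g(x_i)\|_{H'}$, which counts every interior residual twice. So for $i-j\ge 2$ the paper's last step, as written, only holds with $2\kappa_0/(1-\kappa_q)$ in place of $\kappa_0/(1-\kappa_q)$. This is harmless downstream, since the constant is absorbed into $C$ in Theorem \ref{thm:ngmres}.

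What the paper's consecutive-difference structure buys is parallelism with the noncontractive argument (Lemma \ref{lemma1} and Theorem \ref{thm:ngmres2}). There, no contraction is available and telescoping through the NGMRES update is unavoidable.

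One correction to your closing remark: telescoping is not awkward because NGMRES iterates fail to be $q$-iterates. The paper never uses $x_{n+1}=q(x_n)$, since the consecutive-pair estimate holds for any two points. The real drawback of telescoping is the double counting just described.
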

\begin{proof}
We first bound $x_{n+1} -x_n$, where $n= k-m, k-m+1, \cdots, k-1.$  Using that $q$ is Lipschitz continuous and applying \eqref{qbd1}, the triangle inequality produces
    \begin{align*}
        \| x_{n+1} - x_n\|_{H} 
        \le & \|x_{n+1} - q(x_{n+1})\|_{H} + \| q(x_{n+1}) - q(x_n)\|_{H}+ \|q(x_n) - x_n\|_{H}
        \\
        \le & \kappa_0 \|g(x_{n+1})\|_{H'} + \kappa_q\|x_{n+1} - x_n\|_{H} + \kappa_0\|g(x_n)\|_{H'}.
        \end{align*}
Rearranging gives 
\begin{align*}
    \|x_{n+1} - x_n\|_{H} \le \frac{\kappa_0}{1-\kappa_q} (\|g(x_{n+1})\|_{H'}+\|g(x_{n})\|_{H'}).
\end{align*}
Consequently, we have 
   \begin{align*}
        \| x_i - x_j\|_{H} \le &  \sum\limits_{n=j}^{i-1} \| x_{n+1} - x_n\|_{H}
        \le \frac{\kappa_0}{1-\kappa_q}\sum\limits_{n=j}^{i}\|g(x_{n})\|_{H'}, \text{ for } i>j, \mbox{ and }
    \\
        \| q(x_k) - x_j\|_{H} \le &\|q(x_k) - x_k\|_{H} + \|x_k -x_j\|_{H}
        \le \kappa_0\|g(x_k)\|_{H'} + \frac{\kappa_0}{1-\kappa_q}\sum\limits_{n=j}^{k}\|g(x_{n})\|_{H'},
    \end{align*}
    thanks to triangle inequality.  This completes the proof.
\end{proof}

\begin{thm}[NGMRES convergence with general $m$ (contractive case)]
\label{thm:ngmres}
  Let Assumptions \ref{assum1} and \ref{assum3} hold, and suppose $q$ is Lipschitz continuous on $H$ with Lipschitz constant $\kappa_q<1$.  Then the solution $x_{k+1}$ from Algorithm \ref{alg:ngmres} with $m\ge1$ satisfies     
     \begin{align}
     \label{eq:gx1mC}
        \| g(x_{k+1})\|_{H'} \le \theta^m_{k+1} \|g(x_k)\|_{H'} + C\bar\alpha^2  \sum\limits_{j=k-m}^k\|g(x_j)\|_{H'}^2, \\
         \| g(x_{k+1})\|_{H'} \le \gamma^m_{k+1} \kappa_1 \|g(x_k)\|_{H'} +  C\bar\alpha^2   \sum\limits_{j=k-m}^k\|g(x_j)\|_{H'}^2,
         \label{eq:gx2mC}
    \end{align}
    where $C$ is constant depending on $\kappa_0, \kappa_q, \sigma_1,  m$.
    
\end{thm}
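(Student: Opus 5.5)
I will follow the structure of the proof of Theorem \ref{thm:ngmres0}. By the triangle inequality and the definition of $\theta^m_{k+1}$,
\begin{align*}
\|g(x_{k+1})\|_{H'} \le \theta^m_{k+1}\|g(x_k)\|_{H'} + \Big\| \alpha^{k+1}_{k+1}\big(g(x_{k+1})-g(q(x_k))\big) + \sum_{j=k-m}^k \alpha^{k+1}_j \big(g(x_{k+1})-g(x_j)\big)\Big\|_{H'}.
\end{align*}
This splitting uses only $\sum_i\alpha^{k+1}_i=1$. Once the remainder $R$ is bounded by $C\bar\alpha^2\sum_{j=k-m}^k\|g(x_j)\|_{H'}^2$, the bound \eqref{eq:gx2mC} follows exactly as in the $m=0$ case. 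One writes the first term as $\gamma^m_{k+1}\|g(q(x_k))\|_{H'}$ and applies \eqref{qbd2}.

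To bound $R$, I write each difference with the mean-value formula, expanding around $x_{k+1}$:
\begin{align*}
g(x_{k+1})-g(y)=\int_0^1 g'(x_{k+1}+t(y-x_{k+1});\,x_{k+1}-y)\,dt .
\end{align*}
Here $y=q(x_k)$ or $y=x_j$. The key algebraic fact is that the increments sum to zero with the weights:
\begin{align*}
\alpha^{k+1}_{k+1}(x_{k+1}-q(x_k))+\sum_j\alpha^{k+1}_j(x_{k+1}-x_j)=x_{k+1}-x_{k+1}=0,
\end{align*}
by Step 2 of Algorithm \ref{alg:ngmres} and the constraint. Hence, by \eqref{dglinear}, I may subtract $\int_0^1 g'(x_{k+1};\cdot)\,dt$ applied to this zero combination. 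Then $R$ becomes a weighted sum of terms
\begin{align*}
\int_0^1 \big(g'(x_{k+1}+t(y-x_{k+1});\,x_{k+1}-y)-g'(x_{k+1};\,x_{k+1}-y)\big)\,dt .
\end{align*}
By \eqref{gineq1}, each such term is bounded by $\frac{\sigma_1}{2}\|x_{k+1}-y\|_H^2$. This gives
\begin{align*}
R\le \frac{\sigma_1}{2}\Big(|\alpha^{k+1}_{k+1}|\,\|x_{k+1}-q(x_k)\|_H^2+\sum_j|\alpha^{k+1}_j|\,\|x_{k+1}-x_j\|_H^2\Big).
\end{align*}

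It remains to bound $\|x_{k+1}-y\|_H$. Again using the constraint, I write $x_{k+1}-y=\alpha^{k+1}_{k+1}(q(x_k)-y)+\sum_i\alpha^{k+1}_i(x_i-y)$. So $\|x_{k+1}-y\|_H\le\bar\alpha\max\{\|q(x_k)-x_j\|_H,\|x_i-x_j\|_H\}$ over the relevant indices. Lemma \ref{lemma0} bounds every such difference by $\frac{2\kappa_0}{1-\kappa_q}\sum_{n=k-m}^k\|g(x_n)\|_{H'}$. Squaring and applying Cauchy--Schwarz, $(\sum_n a_n)^2\le(m+1)\sum_n a_n^2$, gives $\|x_{k+1}-y\|_H^2\le C\bar\alpha^2\sum_n\|g(x_n)\|_{H'}^2$.

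The main obstacle is the power of $\bar\alpha$. Multiplying by the outer weights $\sum|\alpha_i|\le\bar\alpha$ naively gives $\bar\alpha^3$ rather than the stated $\bar\alpha^2$. I would first try to keep one factor of $\alpha$ paired with each increment, avoiding the crude re-expansion of $x_{k+1}-y$. If this bookkeeping cannot reach $\bar\alpha^2$, I would absorb the extra $\bar\alpha$ (recall $\bar\alpha\ge1$ by \eqref{alphabd}) into a constant $C$ that is permitted to depend on the uniform bound. The dependence of $C$ on $\kappa_0,\kappa_q,\sigma_1,m$ comes from the lemma, the Lipschitz constant, and the Cauchy--Schwarz step.
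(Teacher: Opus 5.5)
Everything in your proposal up to and including the bound
\[
\|R\|_{H'}\le \tfrac{\sigma_1}{2}\Big(|\alpha^{k+1}_{k+1}|\,\|x_{k+1}-q(x_k)\|_H^2+\sum_j|\alpha^{k+1}_j|\,\|x_{k+1}-x_j\|_H^2\Big)
\]
is correct, as is the reduction of \eqref{eq:gx2mC} to \eqref{eq:gx1mC}. But this route only gives $C\bar\alpha^3\sum_{j=k-m}^k\|g(x_j)\|_{H'}^2$, and neither fallback you offer reaches the stated result. Absorbing the extra $\bar\alpha$ into $C$ is not allowed. The statement fixes $C=C(\kappa_0,\kappa_q,\sigma_1,m)$, and since $\bar\alpha\ge1$, a bound $C\bar\alpha^3$ with $C$ independent of $\bar\alpha$ is strictly weaker; Remark~\ref{rmk:con-noncon} relies on exactly this $\bar\alpha$-dependence.

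Better bookkeeping afterwards cannot help either. The loss occurs when you subtract $g'(x_{k+1};\cdot)$, because $\sum_a|\alpha_a|\,\|x_{k+1}-y_a\|_H^2$ is itself genuinely cubic in the weights. For two points at distance $d$ with weights $N$ and $1-N$, it equals $N(N-1)(2N-1)d^2$, while $\bar\alpha=2N-1$. Already for $m=0$ your bound exceeds the true order $|\alpha(1-\alpha)|\,\|q(x_k)-x_k\|_H^2$ by the factor $|\alpha|+|1-\alpha|$.

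The missing idea, which is the paper's route, is to compare the segment points with each other rather than with the common base point $x_{k+1}$. Write $y_{k+1}=q(x_k)$, $y_j=x_j$, and $z_a(t)=x_{k+1}-t(x_{k+1}-y_a)$. Before estimating anything, substitute $x_{k+1}-y_a=\sum_b\alpha_b(y_b-y_a)$ inside the derivative, so that
\[
R=\sum_{a,b}\alpha_a\alpha_b\int_0^1 g'(z_a(t);\,y_b-y_a)\,dt .
\]
Then symmetrize over $(a,b)\leftrightarrow(b,a)$. Each unordered pair contributes
\[
\alpha_a\alpha_b\int_0^1\big(g'(z_a(t);y_b-y_a)-g'(z_b(t);y_b-y_a)\big)\,dt .
\]
Because $z_a(t)-z_b(t)=t(y_a-y_b)$ contains no weights, \eqref{gineq1} gives $\|R\|_{H'}\le\frac{\sigma_1}{2}\sum_{a<b}|\alpha_a||\alpha_b|\,\|y_a-y_b\|_H^2$, which is \eqref{P1}. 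Lemma~\ref{lemma0} and Cauchy--Schwarz, exactly as in your last step, then yield $C\bar\alpha^2$ with the stated dependence of $C$.
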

\begin{remark} Just as in the $m=0$ case, the gain of the optimization problem  $\gamma^m_{k+1}$ is revealed to be the mechanism responsible for NGMRES convergence acceleration.  The term $\theta_{k+1}^m$ approximates the linear convergence rate of NGMRES, and we show in Section 4
that it is remarkably accurate in this regard.  We also show in Section 4 that $\theta_{k+1}^m$ can be used to construct an adaptive strategy for choosing the NGMRES depth $m$ at each step.
\end{remark}

\begin{proof}
For $j = k-m, k-m+1, \dots, k,$  we compute 
\begin{align*}
     x_{k+1} -x_j 
      = &  \alpha^{k+1}_{k+1} q(x_k) + \sum\limits_{i= k-m}^k \alpha_i^{k+1}x_i  -\bigg( \alpha_{k+1}^{k+1} +  \sum\limits_{i = k-m}^k \alpha_i^{k+1} \bigg) x_j\\
  = & \alpha_{k+1}^{k+1} (q(x_k) - x_j) + \sum\limits_{i=k-m}^k \alpha_i^{k+1} (x_i - x_j),
\end{align*}
and \begin{align*}
    x_{k+1} - q(x_k) = & 
   - \sum\limits_{i=k-m}^k \alpha_i^{k+1} (q(x_k) - x_i).
\end{align*}
thanks to $\alpha_{k+1}^{k+1}+ \sum\limits_{i=k-m}^k \alpha_i^{k+1} =1$.
From the triangle inequality, we have
   \begin{align}
     &   \hspace{-.3in} \| g(x_{k+1})\|_{H'} 
     \nonumber\\
    \le  &\left\| \alpha_{k+1}^{k+1} g( q(x_k)) + \sum\limits_{j=k-m}^k \alpha^{k+1}_j g(x_k) \right\|_{H'}  + 
    \bigg\| g(x_{k+1}) - \alpha^{k+1}_{k+1} g( q(x_k)) - \sum\limits_{j=k-m}^k \alpha^{k+1}_jg(x_j) \bigg\|_{H'} 
    \nonumber \\
    \le &  \theta_{k+1}^m \|g(x_{k} )\|_{H'} +   \bigg\| \alpha_{k+1}^{k+1} \bigg(g (x_{k+1}) - g(q(x_k)) \bigg)
    + \sum\limits_{j=k-m}^k \alpha_j^{k+1}\bigg( g(x_{k+1})  - g(x_j) \bigg) \bigg\|_{H'} .
    \label{eq:temp}
      \end{align}
      It remains to bound the last term of this last expression to obtain \eqref{eq:gx1mC}. From Taylor expansion, expansion of $x_{k+1}$ and \eqref{dglinear}, we rearrange to get
      \begin{align*}
     g (x_{k+1}) - g(q(x_k)) =  &  \int_0^1 g'\left(z_{k+1,k+1}(t); x_{k+1}-q(x_k) \right) dt
      \\
    = &  \int_0^1 g'\left(z_{k+1,k+1}(t); -\sum\limits_{i=k-m}^k \alpha_i^{k+1} (q(x_k) - x_i)\right) dt 
      \\
      = & -\sum\limits_{i=k-m}^k \alpha_i^{k+1}  \int_0^1 g'(z_{k+1,k+1}(t); q(x_k)-x_i) dt ,
      \end{align*}   
      where $ z_{k+1,k+1}(t) =  x_{k+1} -t (x_{k+1}-q(x_k)).$ 
 Similarly, for $j = k-m, k-m+1, \dots, k,$ we have
      \begin{align*}
      g(x_{k+1})  - g(x_j)  = &  \int_0^1 g'\left(z_{k+1,j}(t); \left(  \alpha_{k+1}^{k+1} q(x_k) + \sum\limits_{i=k-m}^k \alpha^{k+1}_i x_i \right) - x_j\right) dt
      \\
      =  & \int_0^1 g'\left(z_{k+1,j}(t);   \alpha_{k+1}^{k+1} (q(x_k) - x_j) + \sum\limits_{i=k-m}^k \alpha^{k+1}_i (x_i - x_j) \right)  dt
	\\
      = & \alpha_{k+1}^{k+1} \int_0^1 g'(z_{k+1,j}(t); q(x_k) - x_j) +  \sum\limits_{i = k-m}^k \alpha_i^{k+1}\int_0^1 g'(z_{k+1,j}(t); x_i - x_j) dt,
      \end{align*}
      where $ z_{k+1, j}(t) = x_{k+1}t(x_{k+1}-x_j).$ 
Combining the above two equations yields
\begin{align*}
   &  \alpha_{k+1}^{k+1} \bigg(g (x_{k+1}) - g(q(x_k)) \bigg)
    + \sum\limits_{j=k-m}^k \alpha_j^{k+1}\bigg( g(x_{k+1})  - g(x_j) \bigg)
    \\
    =&  -\alpha_{k+1}^{k+1} \sum\limits_{i=k-m}^k \alpha_i^{k+1}  \int_0^1 g'(z_{k+1,k+1}(t); q(x_k)-x_i) dt 
    +\alpha_{k+1}^{k+1} \sum\limits_{j=k-m}^k \alpha_j^{k+1} \int_0^1 g'(z_{k+1,j}(t); q(x_k)-x_j)dt \\
     &  + \sum\limits_{j=k-m}^k \alpha_j^{k+1} \sum\limits_{i = k-m}^k \alpha_i^{k+1}\int_0^1 g'(z_{k+1,j}(t); x_i - x_j) dt
        \\
      = & \alpha_{k+1}^{k+1} \sum\limits_{j=k-m}^k \alpha_j^{k+1}  \int_0^1 g'(z_{k+1,j}(t); q(x_k)-x_j) - g'(z_{k+1,k+1}(t), q(x_k)-x_j) dt 
     \\
      &  + \sum_{i, j=k-m, i>j}^k \alpha_i^{k+1} \alpha_j^{k+1}
      \int_0^1 g'(z_{k+1,j}(t); x_j - x_i) - g'(z_{k+1, i}(t) ; x_j -x_i) dt,
      \end{align*}
where the last step comes from
       \begin{align*}
     &  \sum_{i, j=k-m }^k \alpha_i^{k+1} \alpha_j^{k+1}
      \int_0^1 g'(z_{k+1,j}(t); x_j - x_i) \ dt 
      \\ 
      = &
       \sum_{i, j=k-m, i>j }^k \alpha_i^{k+1} \alpha_j^{k+1}
      \int_0^1 g'(z_{k+1,j}(t); x_j - x_i) \ dt  +  \sum_{i, j=k-m , i<j}^k \alpha_i^{k+1} \alpha_j^{k+1}
      \int_0^1 g'(z_{k+1,j}(t); x_j - x_i) \ dt 
      \\
      = &
       \sum_{i, j=k-m, i>j }^k \alpha_i^{k+1} \alpha_j^{k+1}
      \int_0^1 g'(z_{k+1,j}(t); x_j - x_i) \ dt  +  \sum_{i, j=k-m , i>j}^k \alpha_i^{k+1} \alpha_j^{k+1}
      \int_0^1 g'(z_{k+1,i}(t); x_i - x_j) \ dt 
      \\ = &
        \sum_{i, j=k-m, i>j}^k \alpha_i^{k+1} \alpha_j^{k+1}
      \int_0^1  g'(z_{k+1,j}(t); x_j - x_i) - g'(z_{k+1, i}(t) ; x_j -x_i) dt.
      \end{align*}
Applying  \eqref{gineq1}, the last term in \eqref{eq:temp} reduces with elementary analysis to
\begin{align}
 & \left\| \alpha_{k+1}^{k+1} \bigg(g (x_{k+1}) - g(q(x_k)) \bigg)
    + \sum\limits_{j=k-m}^k \alpha_j^{k+1}\bigg( g(x_{k+1})  - g(x_j) \bigg) \right\|_{H'}
    \nonumber \\
    \le &    |\alpha_{k+1}^{k+1}| \sum\limits_{j=k-m}^k |\alpha_j^{k+1}|  \int_0^1  \left\|g'(z_{k+1,j}(t); q(x_k)-x_j) - g'(z_{k+1,k+1}(t), q(x_k)-x_j) \right\|_{H'} dt 
    \nonumber \\ &
     +  \sum\limits_{i,j=k-m, i>j}^k |\alpha_j^{k+1}||\alpha_i^{k+1}| \int_0^1 \left\| g'(z_{k+1,j}(t); x_j - x_i) - g'(z_{k+1, i}(t); x_j - x_i)  \right\|_{H'}dt 
     \nonumber \\
     \le & \sigma_1 |\alpha_{k+1}^{k+1}| \sum\limits_{j=k-m}^k |\alpha_j^{k+1}| \| q(x_k) - x_j\|_{H} \int_0^1\|z_{k+1,j}(t) - z_{k+1,k+1}(t)\|_{H} dt
    \nonumber \\&
    + \sigma_1\sum\limits_{i,j = k-m, i>j}^k |\alpha_j^{k+1}||\alpha_i^{k+1}| \|x_j - x_i\|_{H} \int_0^1 \| z_{k+1, j}(t) - z_{k+1,i}(t)\|_{H}dt 
    \nonumber \\
    \le & 
\frac{1}{2}\sigma_1 |\alpha_{k+1}^{k+1}| \sum\limits_{j=k-m}^k |\alpha_j^{k+1}| \| q(x_k) - x_j\|_{H}^2
    + \frac{1}{2}\sigma_1\sum\limits_{i,j = k-m, i>j}^k |\alpha_j^{k+1}||\alpha_i^{k+1}| \|x_j - x_i\|_{H}^2. \label{P1}
\end{align}
From here, applying Lemma \ref{lemma0} produces \eqref{eq:gx1mC}.
 
To prove \eqref{eq:gx2mC}, begin with \eqref{eq:temp} and repeat the analysis above except use the definition of $\gamma_{k+1}^m$ along with \eqref{qbd2} in \eqref{eq:temp},
  \begin{align*}
      \left\| \alpha_{k+1}^{k+1} g(q(x_k)) + \sum\limits_{j= k-m}^k \alpha_j^k g(x_k)  \right\|_{H'} = \gamma^m_{k+1} \| g(q(x_k))\|_{H'} \le \gamma^m_{k+1} \kappa_1 \| g(x_k)\|_{H'}.
  \end{align*} 
  This finishes the proof.
  \end{proof}

\subsection{NGMRES convergence theory for general $m$: the noncontractive case}

Now we consider the general depth case in the noncontractive case. Here, we use a different preliminary lemma.
\begin{lemma} 
\label{lemma1}
Let $m\ge 1$, then for any nonnegative integer $n$, the sequence of iterates $\{ x_n \}$ from Algorithm \ref{alg:ngmres} satisfies
\begin{align}
 \| x_{k+1} - x_k\| \le \sum\limits_{j=0}^{k} \widetilde C_{k-j}^m \bar\alpha^{k-j+1} \|g(x_j)\|_{H'},
 \label{ineq:induc}
\end{align}
where $\{ \widetilde C_{j}^m \}$ are constants depending on $j,m$.
\end{lemma}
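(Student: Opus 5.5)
We argue by strong induction on $k$, expressing each step $x_{k+1}-x_k$ through the fixed point residual at $x_k$ and the previous steps. Using $\sum_{i=k-m}^{k+1}\alpha_i^{k+1}=1$ as in the proof of Theorem \ref{thm:ngmres}, we write
\begin{align*}
x_{k+1}-x_k = \alpha^{k+1}_{k+1}\bigl(q(x_k)-x_k\bigr) + \sum_{i=k-m}^{k-1}\alpha_i^{k+1}(x_i-x_k),
\end{align*}
where $m=\min\{m_{max},k\}$. By \eqref{qbd1} and Assumption \ref{assum3}, the first term is bounded by $\bar\alpha\kappa_0\|g(x_k)\|_{H'}$. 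For the remaining terms we telescope $x_k-x_i=\sum_{n=i}^{k-1}(x_{n+1}-x_n)$. Since each $|\alpha_i^{k+1}|\le\bar\alpha$ and each step $x_{n+1}-x_n$ appears at most $m$ times,
\begin{align*}
\|x_{k+1}-x_k\|_H \le \bar\alpha\kappa_0\|g(x_k)\|_{H'} + m\bar\alpha\sum_{n=k-m}^{k-1}\|x_{n+1}-x_n\|_H .
\end{align*}
This recursion is the core of the argument. No contractivity of $q$ is used, which is why the bound must reach all the way back to $x_0$ and the constants grow with powers of $\bar\alpha$.

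The base case $k=0$ has $m=0$, so $x_1-x_0=\alpha_1^1(q(x_0)-x_0)$ and $\|x_1-x_0\|\le \kappa_0\bar\alpha\|g(x_0)\|_{H'}$. This fixes $\widetilde C_0^m=\kappa_0$.

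For the inductive step, assume \eqref{ineq:induc} holds for every index $n\le k-1$, i.e. $\|x_{n+1}-x_n\|\le\sum_{j=0}^{n}\widetilde C^m_{n-j}\bar\alpha^{n-j+1}\|g(x_j)\|_{H'}$. Substituting into the recursion, the coefficient of $\|g(x_j)\|_{H'}$ for $j<k$ is
\begin{align*}
m\bar\alpha\sum_{n=\max(j,k-m)}^{k-1}\widetilde C^m_{n-j}\bar\alpha^{n-j+1}.
\end{align*}
By \eqref{alphabd} we have $\bar\alpha\ge1$, so $\bar\alpha^{n-j+2}\le\bar\alpha^{k-j+1}$ for every $n\le k-1$. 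The coefficient is therefore at most $\widetilde C^m_{k-j}\bar\alpha^{k-j+1}$ once we define
\begin{align*}
\widetilde C^m_{\ell} := m\sum_{p=\max(0,\ell-m)}^{\ell-1}\widetilde C^m_{p}, \quad \ell\ge1.
\end{align*}
For $j=k$ the coefficient is $\kappa_0\bar\alpha$, which matches $\widetilde C_0^m=\kappa_0$. These constants depend only on $\ell$, $m$ and $\kappa_0$, as the lemma requires.

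The main obstacle is bookkeeping rather than analysis. First, the depth $\min\{m_{max},k\}$ is smaller in the early iterations, so the recursion for $\widetilde C^m_\ell$ must dominate all depths $\le m$; the $\max$ in the index range and a final monotone majorization handle this. Second, the powers of $\bar\alpha$ must be matched exactly to $\bar\alpha^{k-j+1}$, and this relies on $\bar\alpha\ge1$. I would also note that the hypothesis ``any nonnegative integer $n$'' in the statement should be read as ``any $k\ge0$.''
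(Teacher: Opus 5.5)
Your argument is correct. The first half matches the paper's. Both use $\sum_i\alpha_i^{k+1}=1$ to write $x_{k+1}-x_k=\alpha_{k+1}^{k+1}(q(x_k)-x_k)+\sum_{i=k-m}^{k-1}\alpha_i^{k+1}(x_i-x_k)$, telescope, and reach a linear recursive inequality for $d_k=\|x_{k+1}-x_k\|_H$. The paper keeps weights $m-j+1$ where you use the uniform weight $m$; both are loose, since $\sum_i|\alpha_i^{k+1}|\le\bar\alpha$ already allows weight $1$.

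The real difference is how the recursion is unrolled. The paper multiplies by $t^k$ and sums to get $D(t)\le\bar\alpha G(t)+\bar\alpha\Omega_m(t)D(t)$. It then inverts $1-\bar\alpha\Omega_m(t)$ and bounds the coefficients using $\bar\alpha\ge1$, so its $\widetilde C_j^m$ are the Taylor coefficients of $1/(1-\Omega_m(t))$. You instead use strong induction with constants defined by the explicit recurrence $\widetilde C^m_\ell=m\sum_{p=\max(0,\ell-m)}^{\ell-1}\widetilde C^m_p$, which are the Taylor coefficients of $\kappa_0/(1-m\sum_{s=1}^m t^s)$; you use $\bar\alpha\ge1$ in the same way.

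Your route is more elementary and somewhat more careful:
\begin{enumerate}
\item The paper's passage from $D(t)\le\cdots$ to a termwise bound is valid only when read as a coefficientwise inequality between formal power series with nonnegative coefficients. A pointwise inequality for small real $t$ would not suffice, and your induction avoids the issue entirely.
\item You state explicitly that the smaller early depths $\min\{m_{max},k\}$ are dominated by the constants built from $m_{max}$; the paper leaves this implicit.
\end{enumerate}
The generating-function form, in exchange, makes the exponential growth of $\widetilde C^m_j$ in $j$ visible at a glance.

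One further point in your favor: you correctly carry $\kappa_0$ from \eqref{qbd1} into $\widetilde C^m_0=\kappa_0$. The paper's proof drops it, bounding $|\alpha_1^1|\,\|q(x_0)-x_0\|_H$ by $\bar\alpha\|g(x_0)\|_{H'}$. The constants therefore genuinely depend on $\kappa_0$ as well as on $j$ and $m$, so your remark that this is ``as the lemma requires'' slightly overstates the match with the statement as written.
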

\begin{proof}
We use induction to establish the bound on  $\{ \|x_{k+1} - x_k\|_H \}_{k=0}^\infty$.
With an initial guess $x_0$ in Algorithm \ref{alg:ngmres}, we obtain 
\begin{align*}
 \|x_1  - x_0\|_{H} =&  \| \alpha^1_1 q(x_0) + \alpha_0^1 x_0 - x_0\|_{H} 
 \le  | \alpha^1_1| \| q(x_0) - x_0\|_{H} \le \bar \alpha \|g(x_0)\|_{H'},
\end{align*}
 where $\alpha_0^1+ \alpha_1^1 =1$. 
 Similarly,
 \begin{align*}
 \| x_2 - x_1\|_{H}  = & \| \alpha_2^2 q(x_1) + \alpha_1^2 x_1 + \alpha_0^2 x_0 - x_1\|_{H} 
 \le |\alpha_2^2| \| q(x_1)-x_1\|_H + |\alpha_0^2 | \| x_1-x_0\|_{H}
\\
  \le& \bar \alpha \| q(x_1)-x_1\|_H + \bar \alpha \| x_1-x_0\|_{H}.
 \end{align*}
At Step $k+1$, from the expansion of $x_{k+1}$ and \eqref{qbd1} we have
 \begin{align*}
 \| x_{k+1} - x_k\|_{H}  = & \left\| \alpha^{k+1}_{k+1} q(x_k) + \sum\limits_{j= k-m}^k \alpha_j^{k+1} x_j  - x_k \right\|_{H}
 \\
 \le & | \alpha^{k+1}_{k+1}|  \| q(x_k)  - x_k \|_{H}+ \sum\limits_{j= k-m}^{k-1} |\alpha_j^{k+1}| \| x_k - x_j\|_{H}
 \\
 \le & \bar \alpha \|g(x_k)\|_{H'} + \bar \alpha \sum\limits_{j=k-m}^{k-1} \sum\limits_{i = j }^{k-1} \| x_{i+1} - x_{i}\| _{H}
 \\
  = &  \bar \alpha \| g(x_k)\|_{H'} + \bar\alpha \sum_{j=1}^{m} (m-j+ 1) \|x_{k-j+1} - x_{k-j}\|_H.
 \end{align*}
Next, we apply the method of generating functions to verify \eqref{ineq:induc}. 
Denoting $ d_k = \|x_{k+1} - x_k\|_H,\, g_k = \| g(x_k)\|_{H'},$ and $\omega_k = m-k+1,$
we rewrite the above recurrence inequalities as
\begin{align*}
d_k \le \bar\alpha g_k + \bar\alpha \sum\limits_{j=1}^{m} \omega_j d_{k-j}, 
\end{align*}
where $k = 0, 1,2,\dots $ and $d_k =0, $ if $ k\le  0$.
Multiplying the equation by $t^k$ and summing from $k=0$ to $\infty$ gives
\begin{align}
\label{Dineq}
D(t) \le  &  \bar\alpha G(t)  + \bar\alpha \sum_{k=0}^\infty \bigg( \sum_{j=1}^m \omega_j d_{k-j} \bigg) t^k 
=  \bar\alpha G(t)  +  \bar\alpha \bigg( \sum_{j=1}^m \omega_j t^j \bigg) \bigg( \sum_{j=0}^\infty d_j t^j \bigg) =  \bar\alpha G(t)  +  \bar\alpha \Omega_m(t) D(t),
\end{align}
where $
D(t) = \sum\limits_{j=0}^\infty d_j t^j, \quad 
G(t) = \sum\limits_{j=0}^\infty g_j t^j, \quad 
\Omega_m(t) = \sum\limits_{j=1}^m \omega_j t^j$.
It is clear that $\Omega_m(t)$ is a polynomial function with $\Omega_m(0) =0$.  
By the continuity, there exists $r = r(\bar\alpha, m)>0$ such that $\Omega_m(t) <\frac1{\bar\alpha}$ for all $|t|< r$. 
Letting $|t|<r$, 
the power series of $\frac{1}{1-\bar\alpha \Omega_m(t)}$ is bounded by 
\begin{align*}
\frac{1}{1-\bar\alpha \Omega_m(t)}=& 
\sum\limits_{n=0}^\infty \bar\alpha^n \left(\sum_{j=1}^m w_jt^j \right)^n
= \sum\limits_{n=0}^\infty \bar\alpha^n \bigg( \sum_{j=n}^{nm} C(j,n,m) t^j\bigg)
\\
=& \sum\limits_{j=0}^\infty \bigg( \sum_{n=1}^j C(j,n,m)\bar\alpha^n \bigg) t^j 
= \sum\limits_{j=0}^\infty \bigg( \sum_{n=1}^j C(j,n,m)\bar\alpha^{n-j} \bigg) \bar\alpha^j t^j\
 \\
 \le & \sum\limits_{j=0}^\infty \bigg( \sum_{n=1}^j C(j,n,m) \bigg) \bar\alpha^j t^j 
 \coloneqq  \sum\limits_{j=0}^\infty  \widetilde C_j^m \bar\alpha^j  t^j,
\end{align*}
thanks to \eqref{alphabd},
where $\widetilde C_j^m$ depends on the integers $j,m$, and $\{\widetilde C_j^m \bar\alpha^j\}$ are upper bounds of the coefficients of the power series. 
Thus \eqref{Dineq} reduces to
\begin{align*}
D(t) \le &  \frac{\bar\alpha }{1-\bar\alpha \Omega_m(t)} G(t)
\le  \bar\alpha \sum\limits_{j=0}^\infty \widetilde C_j^m\bar\alpha^{j}  t^j \sum\limits_{j=0}^\infty g_j t^j
  = \bar\alpha  \sum\limits_{k=0}^\infty \bigg( \sum\limits_{j=0}^{k} \widetilde C_{k-j}^m  \bar\alpha^{k-j} g_j\bigg)  t^k ,
\end{align*}
 Consequently,  
\begin{align*}
\|x_{k+1} - x_k\|_H \le \bar\alpha \sum\limits_{j=0}^k \widetilde C_{k-j}^m \bar\alpha^{k-j}\|g(x_j)\|_{H'}.
\end{align*}
 This completes the proof.
\end{proof}
\begin{thm}[NGMRES convergence with general $m$ (noncontractive case)]
\label{thm:ngmres2}
  Let Assumptions \ref{assum1} and \ref{assum3} hold.  Then the solution $x_{k+1}$ from Algorithm \ref{alg:ngmres} with $m\ge1$ satisfies 
     \begin{align}
     \label{eq:gx1m}
        \| g(x_{k+1})\|_{H'} \le \theta^m_{k+1} \|g(x_k)\|_{H'} +  \sum\limits_{j=0}^k \widetilde C^m_{\bar\alpha}(j)\|g(x_j)\|_{H'}^2, \\
         \| g(x_{k+1})\|_{H'} \le \gamma^m_{k+1} \kappa_1 \|g(x_k)\|_{H'} +   \sum\limits_{j=0}^k \widetilde C^m_{\bar\alpha}(j)\|g(x_j)\|_{H'}^2,
         \label{eq:gx2m}
    \end{align}   
     where $\widetilde C^m_{\bar\alpha}(j)= \left\{ \begin{array}{ll}
     C_{\bar\alpha}^{m}(j), & \bar\alpha>1 \\ 
      C(j), & \bar\alpha=1 \end{array} \right.$, $C_{\bar\alpha}^m(j) =  \left\{ \begin{array}{ll} C\bar\alpha^2 , & j=k, \\[8pt]
		C\bar\alpha^{4}\frac{1-\bar\alpha^{2(k-j)}}{1-\bar\alpha^2}, & j = k-m,\dots, k-1,\\[8pt]     
      C\bar\alpha^{2(k-m-j)+4}\frac{1-\bar\alpha^{2m}}{1-\bar\alpha^2}, & j = 0,1,\dots, k-m-1. \end{array}\right. $ and  $C$ is constant depending on $m,\sigma_1, \kappa_0$.
\end{thm}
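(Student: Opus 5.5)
The plan reuses the proof of Theorem \ref{thm:ngmres} verbatim up to \eqref{P1}, since nothing before that inequality used the contractivity of $q$. Specifically, the triangle inequality \eqref{eq:temp} splits $\|g(x_{k+1})\|_{H'}$ into two pieces. The first is the optimization residual, which equals $\theta^m_{k+1}\|g(x_k)\|_{H'}$, or alternatively $\gamma^m_{k+1}\|g(q(x_k))\|_{H'}\le \gamma^m_{k+1}\kappa_1\|g(x_k)\|_{H'}$ by \eqref{qbd2}. The second is the remainder
\[
\alpha_{k+1}^{k+1}\big(g(x_{k+1})-g(q(x_k))\big)+\sum_{j=k-m}^k\alpha_j^{k+1}\big(g(x_{k+1})-g(x_j)\big).
\]
The Fundamental Theorem of Calculus expansion, linearity \eqref{dglinear}, and the symmetrization of the double sum then give \eqref{P1}. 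This bounds the remainder by $\tfrac12\sigma_1\bar\alpha^2$ times
\[
\sum_{j=k-m}^k \|q(x_k)-x_j\|_H^2+\sum_{i>j}\|x_j-x_i\|_H^2 .
\]
So the only new work is to bound the differences $\|q(x_k)-x_j\|_H$ and $\|x_i-x_j\|_H$ without Lemma \ref{lemma0}.

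For this step I would use Lemma \ref{lemma1} in place of Lemma \ref{lemma0}. First, write $\|q(x_k)-x_j\|_H\le \kappa_0\|g(x_k)\|_{H'}+\sum_{n=j}^{k-1}\|x_{n+1}-x_n\|_H$, using \eqref{qbd1} for the first term. Second, telescope $\|x_i-x_j\|_H\le\sum_{n=j}^{i-1}\|x_{n+1}-x_n\|_H$. Third, apply \eqref{ineq:induc} to each increment, so that $\|x_{n+1}-x_n\|_H\le\sum_{l=0}^{n}\widetilde C^m_{n-l}\bar\alpha^{n-l+1}\|g(x_l)\|_{H'}$. Each difference is then a linear combination of $\|g(x_l)\|_{H'}$ for $l=0,\dots,k$, with coefficients that are powers of $\bar\alpha$ times constants depending only on $m$. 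Squaring and applying Cauchy--Schwarz, $(\sum_l a_l b_l)^2\le(\sum_l a_l^2)(\sum_l b_l^2)$ or the cruder $(\sum_{l=1}^N c_l)^2\le N\sum_l c_l^2$, turns each squared difference into a weighted sum of $\|g(x_l)\|_{H'}^2$. The weights are built from $\bar\alpha^{2(\cdot)}$. Summing over the at most $(m+1)^2$ pairs $(i,j)$ yields the quadratic remainder $\sum_{j=0}^k \widetilde C^m_{\bar\alpha}(j)\|g(x_j)\|_{H'}^2$. Combined with the two choices of leading term, this gives \eqref{eq:gx1m} and \eqref{eq:gx2m}.

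The main obstacle is the bookkeeping that produces the stated three-regime weights $C^m_{\bar\alpha}(j)$. I expect the following picture. For $j=k$, only the $\kappa_0\|g(x_k)\|$ and the most recent increments contribute, giving $C\bar\alpha^2$. For $k-m\le j\le k-1$, collecting the powers $\bar\alpha^{2\ell}$ for $\ell=1,\dots,k-j$ produces the geometric sum $\bar\alpha^{4}\frac{1-\bar\alpha^{2(k-j)}}{1-\bar\alpha^2}$ after the prefactor $\bar\alpha^2$ from \eqref{P1}. For $j<k-m$, only the window of $m$ increments $n=k-m,\dots,k-1$ matters; each carries a factor $\bar\alpha^{2(n-j)}$, and the sum over that window gives $\bar\alpha^{2(k-m-j)+4}\frac{1-\bar\alpha^{2m}}{1-\bar\alpha^2}$. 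To keep this clean, I would absorb every $m$-dependent combinatorial factor (the $\widetilde C^m_\ell$, the Cauchy--Schwarz counts, and $\kappa_0^2$) into a single constant $C$, which is legitimate since $\ell\le k$ only enters through the window of length $m$ after the $\bar\alpha$ powers are factored out. The case $\bar\alpha=1$ is handled separately: the geometric sums degenerate to counts $k-j$ or $m$, which are absorbed into $C(j)$.
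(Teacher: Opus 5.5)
Your route is the paper's own: rerun the argument up to \eqref{P1}, telescope $\|x_i-x_j\|_H$ and $\|q(x_k)-x_j\|_H$ into increments, bound the increments with Lemma \ref{lemma1}, square, and regroup the powers of $\bar\alpha$. Your geometric-sum bookkeeping reproduces the three stated exponents correctly.

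The step that fails is the last one. You absorb $\widetilde C^m_\ell$ and the Cauchy--Schwarz counts into a constant depending only on $m,\sigma_1,\kappa_0$, and your justification for this is wrong. Squaring $\|x_{n+1}-x_n\|_H\le\sum_{l=0}^{n}\widetilde C^m_{n-l}\bar\alpha^{n-l+1}\|g(x_l)\|_{H'}$ puts $(\widetilde C^m_{n-l})^2$ and a count $n+1$ in front of $\|g(x_l)\|_{H'}^2$. Only $n$ is confined to the window $k-m\le n\le k-1$. The subscript $n-l$ is not: for $l=0$ it reaches $k-1$, and $n+1$ reaches $k$.

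These constants do grow. $\widetilde C^m_\ell$ is the $t^\ell$ coefficient of $1/(1-\Omega_m(t))$. Already for $m=2$, $\Omega_2(t)=2t+t^2$ gives $\widetilde C^2_\ell=2,5,12,29,\dots$, growing like $(1+\sqrt2)^\ell$. So for $j<k-m$ your bookkeeping actually gives a weight of size $C\,k\max_{\ell<k}(\widetilde C^m_\ell)^2\,\bar\alpha^{2(k-m-j)+4}\frac{1-\bar\alpha^{2m}}{1-\bar\alpha^2}$. This depends on $k$, exponentially so for $m\ge2$, and is not $C^m_{\bar\alpha}(j)$ with $C=C(m,\sigma_1,\kappa_0)$.

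The paper's proof has the same defect: it squares with a factor $2$ and then silently drops $\widetilde C^m_j$ into $C$. So you are not missing an idea the paper supplies, but neither argument establishes the constants as stated. There are two ways to repair it.

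\begin{itemize}
\item Let the constant depend on $k$, or replace $\bar\alpha$ by $\rho_m\bar\alpha$, with $\rho_m$ the growth rate of $\widetilde C^m_\ell$.
\item Sharpen the increment bound by tracking the window diameter $R_k:=\max_{k-m\le i,j\le k}\|x_i-x_j\|_H$. The identity $x_{k+1}-x_j=\alpha^{k+1}_{k+1}(q(x_k)-x_j)+\sum_{i}\alpha^{k+1}_i(x_i-x_j)$ together with \eqref{qbd1} and $\bar\alpha\ge1$ gives $R_{k+1}\le\bar\alpha\big(R_k+\kappa_0\|g(x_k)\|_{H'}\big)$. Hence $R_k\le\kappa_0\sum_{j<k}\bar\alpha^{k-j}\|g(x_j)\|_{H'}$, with no $\widetilde C^m_\ell$ at all, and the right-hand side of \eqref{P1} is at most $\sigma_1\bar\alpha^2\big(\kappa_0\|g(x_k)\|_{H'}+R_k\big)^2$.
\end{itemize}

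Even with the second repair, one obstacle remains. Turning the square of this $k$-term sum into $\sum_j c_j\|g(x_j)\|_{H'}^2$ with $c_j$ of order $\bar\alpha^{2(k-j)}$ costs a factor of order $k$ (take $\|g(x_j)\|_{H'}=\bar\alpha^{j-k}$). So a $k$-independent $C$ with exactly the stated weights does not follow from this line of argument.
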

\begin{remark}\label{rmk:con-noncon} 
Note that Theorem 3.11 holds for contractive iterations as well, although the contractive result from Theorem \ref{thm:ngmres} is a stronger result.  An important difference between the contractive and noncontractive results, i.e. Theorems \ref{thm:ngmres} and \ref{thm:ngmres2}, is that the higher order terms retain their dependence on early iterations. Moreover, the coefficients of the higher order terms in Theorem \ref{thm:ngmres} depend on $\bar\alpha^2$, while the ones in Theorem \ref{thm:ngmres2} are scaled by $\bar\alpha^{2k}$, and due to  \eqref{alphabd}, Theorem \ref{thm:ngmres} is believed a sharper result for the contractive case. 

The bounds in Theorem \ref{thm:ngmres2} suggest higher order terms can become a dominant error source and prevent convergence as $k$ gets larger.  Hence restarts of NGMRES can be very important in the noncontractive case. In our numerical tests, we show restarts dramatically help convergence for the nonlinear Helmholtz problem.
\end{remark}

\begin{proof}
We begin the proof of Theorem \ref{thm:ngmres2} identically to that of Theorem \ref{thm:ngmres}, all the way to \eqref{P1}:
\begin{align*}
 & \left\| \alpha_{k+1}^{k+1} \bigg(g (x_{k+1}) - g(q(x_k)) \bigg)
    + \sum\limits_{j=k-m}^k \alpha_j^{k+1}\bigg( g(x_{k+1})  - g(x_j) \bigg) \right\|_{H'}
    \\
    \le & 
  \frac12 \sigma_1 |\alpha_{k+1}^{k+1}| \sum\limits_{j=k-m}^k |\alpha_j^{k+1}| \| q(x_k) - x_j\|_H^2 
    + \frac12 \sigma_1 \sum\limits_{i,j = k-m, i>j}^k |\alpha_j^{k+1}||\alpha_i^{k+1}| \|x_j - x_i\|_H^2.
\end{align*}
From here, applying $
 \| x_i - x_j\|_H \le   \sum\limits_{n=j}^{i-1} \| x_{n+1} - x_n\|_H, 
 \ \ 
        \| q(x_k) - x_j\|_H \le \|q(x_k) - x_k\|_H + \|x_k -x_j\|_H,$
Lemma \ref{lemma1} and Assumption \ref{assum3} to the above inequality, 
we obtain
\begin{align*}
 & \left\| \alpha_{k+1}^{k+1} \bigg(g (x_{k+1}) - g(q(x_k)) \bigg)
    + \sum\limits_{j=k-m}^k \alpha_j^{k+1}\bigg( g(x_{k+1})  - g(x_j) \bigg) \right\|_{H'}
    \\
\le & \sigma_1 \bar\alpha^2 (m+1) \|q(x_k)- x_k\|_H^2
+C \sigma_1 \bar\alpha^2 \sum\limits_{j=k-m}^{k-1} \sum_{n=j}^{k-1} \|x_{n+1} - x_n\|_H^2
  + C \sigma_1 \bar\alpha^2 \sum_{i,j=k-m, i>j}^k \sum_{n=j}^{i-1} \|x_{n+1} - x_n\|_H^2
  \\
  \le& 
  \sigma_1 \bar\alpha^2 (m+1) \kappa_0^2 \|g(x_k)\|_{H'}^2
  + \sigma_1\bar\alpha^2 C \sum_{n=k-m}^{k-1} \|x_{n+1}-x_n\|_H^2
  \\
  \le& 
  \sigma_1 \bar\alpha^2 (m+1) \kappa_0^2 \|g(x_k)\|_{H'}^2
  + 2\sigma_1\bar\alpha^4C  \sum_{n=k-m}^{k-1} \sum_{j=0}^n \widetilde C_j^m \bar\alpha^{2(n-j)} \|g(x_j)\|_{H'}^2
  \\
  \le& C\bar\alpha^2  \|g(x_k)\|_{H'}^2 
  + C \bar\alpha^4 \sum_{j=0}^{k-m-1}\left( \sum_{n=k-m}^{k-1} \bar\alpha^{2(n-j)} \right) \|g(x_j)\|_{H'}^2
   + C\bar\alpha^4 \sum_{j=k-m}^{k-1} \left( \sum_{n = j }^{k-1} \bar\alpha^{2(n-j)} \right) \|g(x_j)\|_{H'}^2
  \\
  \le& 
  \left\{ \begin{array}{ll} 
  C\bar\alpha^2  \|g(x_k)\|_{H'}^2 
  + C \bar\alpha^4 \sum_{j=0}^{k-m-1} \bar\alpha^{2(k-m-j)}\frac{1-\bar\alpha^{2m}}{1-\bar\alpha^2} \|g(x_j)\|_{H'}^2
  + C \bar\alpha^4\sum_{j=k-m}^{k-1} \frac{1-\bar\alpha^{2(k-j)}}{1-\bar\alpha^2} \| g(x_j)\|_{H'}^2, & \bar\alpha >1,\\[10pt]
   C \sum_{j=0}^{k}\|g(x_j)\|_{H'}^2, & \bar\alpha = 1,
  \end{array}
  \right.
\end{align*}
where $C$ represents some positive constant independent of $\bar\alpha$.
Then combining \eqref{eq:temp} produces \eqref{eq:gx1m}. Finally, \eqref{eq:gx2m} is established similarly except using \eqref{qbd2}.
\end{proof}

\section{Numerical Experiments}

We now give results for four numerical tests that illustrate the theory above for NGMRES.  These tests show the effectiveness of NGMRES in accelerating and enabling convergence, show that $\theta_k^m$ is a very accurate approximation of the nonlinear residuals' linear convergence rate and how this can be used to develop an adaptive depth strategy, how restarts can significantly improve convergence in the noncontractive case, and that better convergence can be obtained if the optimization norm matches that of the theory.  We also show how NGMRES is well suited to use with known deflation techniques to directly find distinct solutions for multi-solution PDEs, and that NGMRES is effective for superlinearly convergent iterations.  Unless otherwise mentioned, $H'$ is used as the optimization norm in our tests.

\subsection{Nonlinear Helmholtz equation}

The nonlinear Helmholtz (NLH) equation arises in nonlinear optics as a
model for propagation of linearly-polarized, time-harmonic electromagnetic waves in Kerr-type dielectrics. i.e. ``lasers.''  Following \cite{BFT07,FT01}, the one dimensional NLH system can be written using two-way boundary conditions as: Find $u:[0,1]\rightarrow \mathbb{C}$ satisfying
\begin{align*}
\frac{d^2 u}{dx^2} + k_0^2 \left( 1 + \epsilon(x) |u|^2 \right) u & = 0, \ \ \ 0<x<1, \\
\frac{du}{dx} + i k_0 u & = 2 i k_0,\ \ \ x=0,\\
\frac{du}{dx} - i k_0 u & = 0,\ \ \ x=1.
\end{align*}
Here, $k_0$ represents the linear wave number in the surrounding medium, and we take $\epsilon(x)$ on $[0,1]$ to be a piecewise constant function to approximate a grated Kerr medium by
\[
\epsilon(x) = \left\{  \begin{array}{lc} 0 & 0\le x<\frac13 \\ 0.5 & \frac13\le x < \frac23 \\ 1 & \frac23 \le x\le 1 \end{array} \right. .
\]
Solutions for $k_0$=20, 40 and 60 (found using our methods below) are shown in Figure \ref{NLH1}.
\begin{figure}[ht!]
\begin{center}
\includegraphics[width = .32\textwidth, height=.32\textwidth,viewport=0 0 530 430, clip]{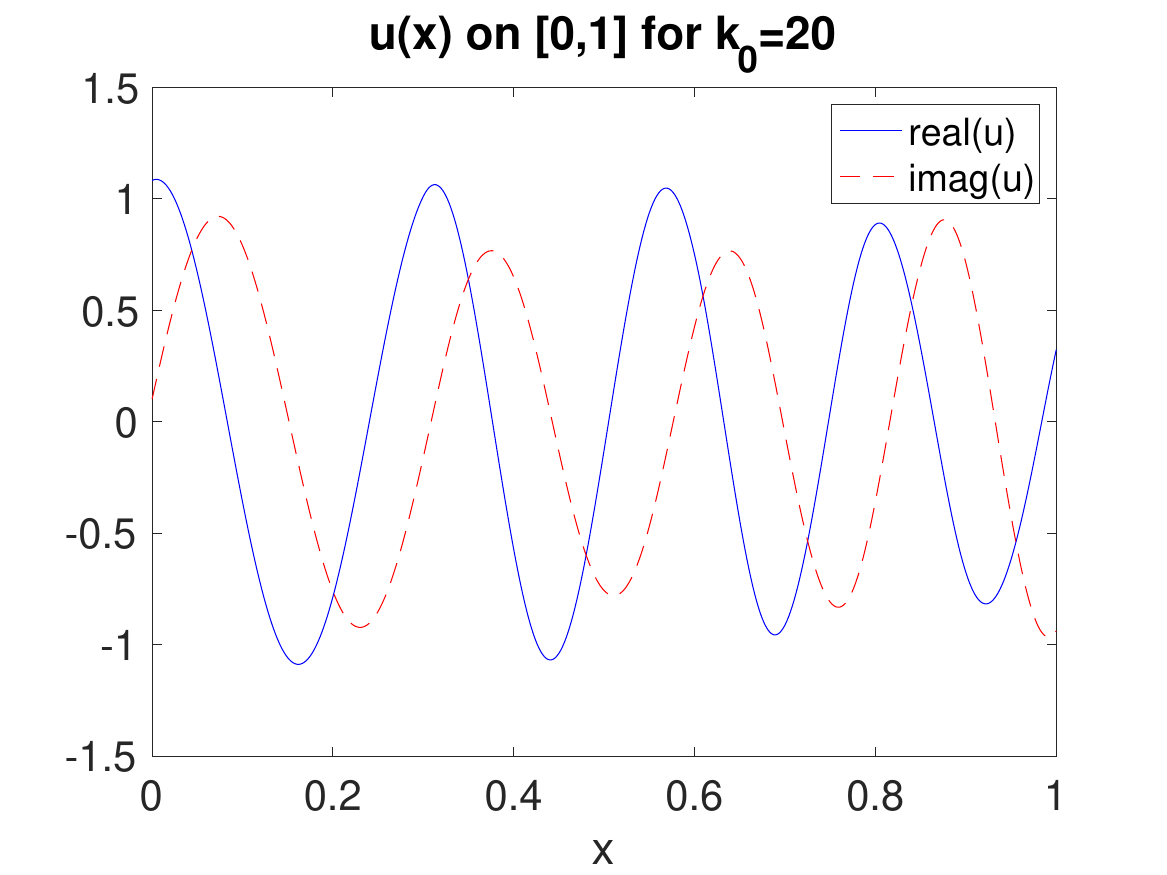}
\includegraphics[width = .32\textwidth, height=.32\textwidth,viewport=0 0 530 430, clip]{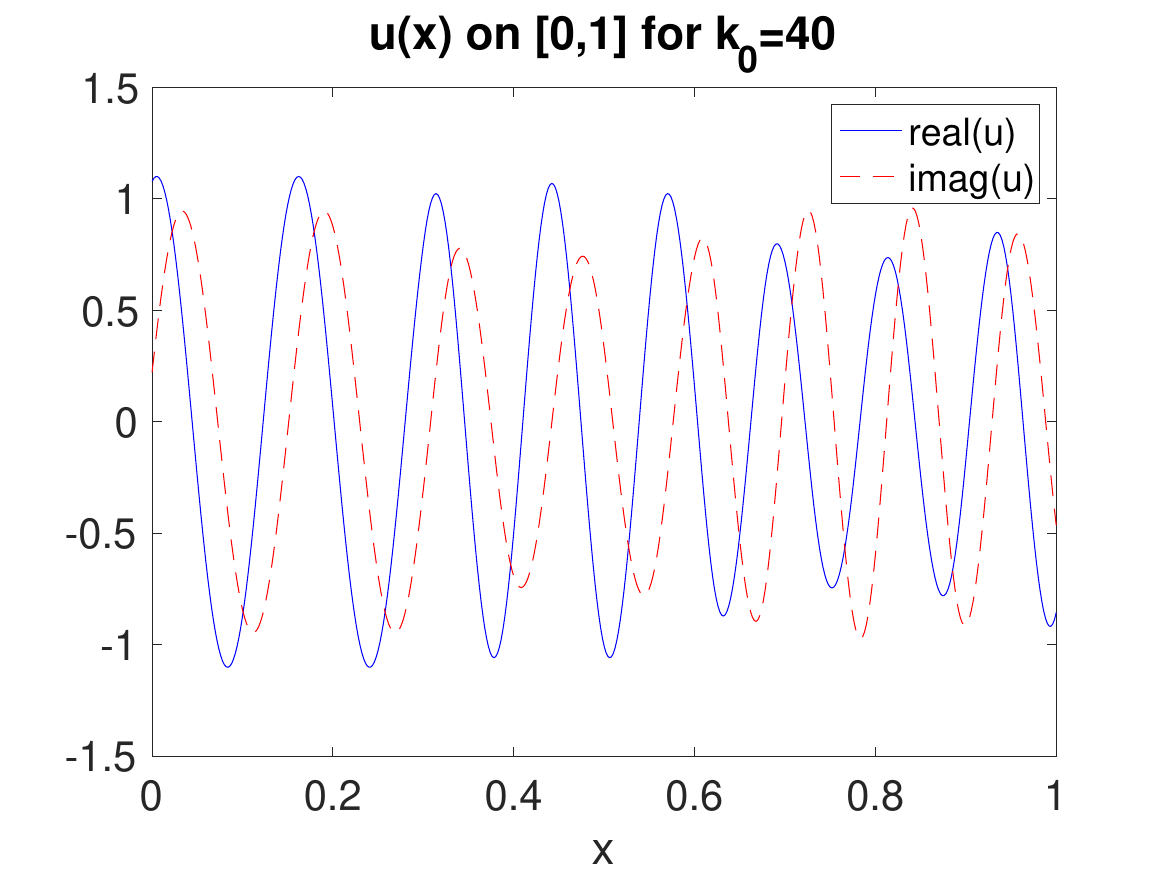}
\includegraphics[width = .32\textwidth, height=.32\textwidth,viewport=0 0 530 430, clip]{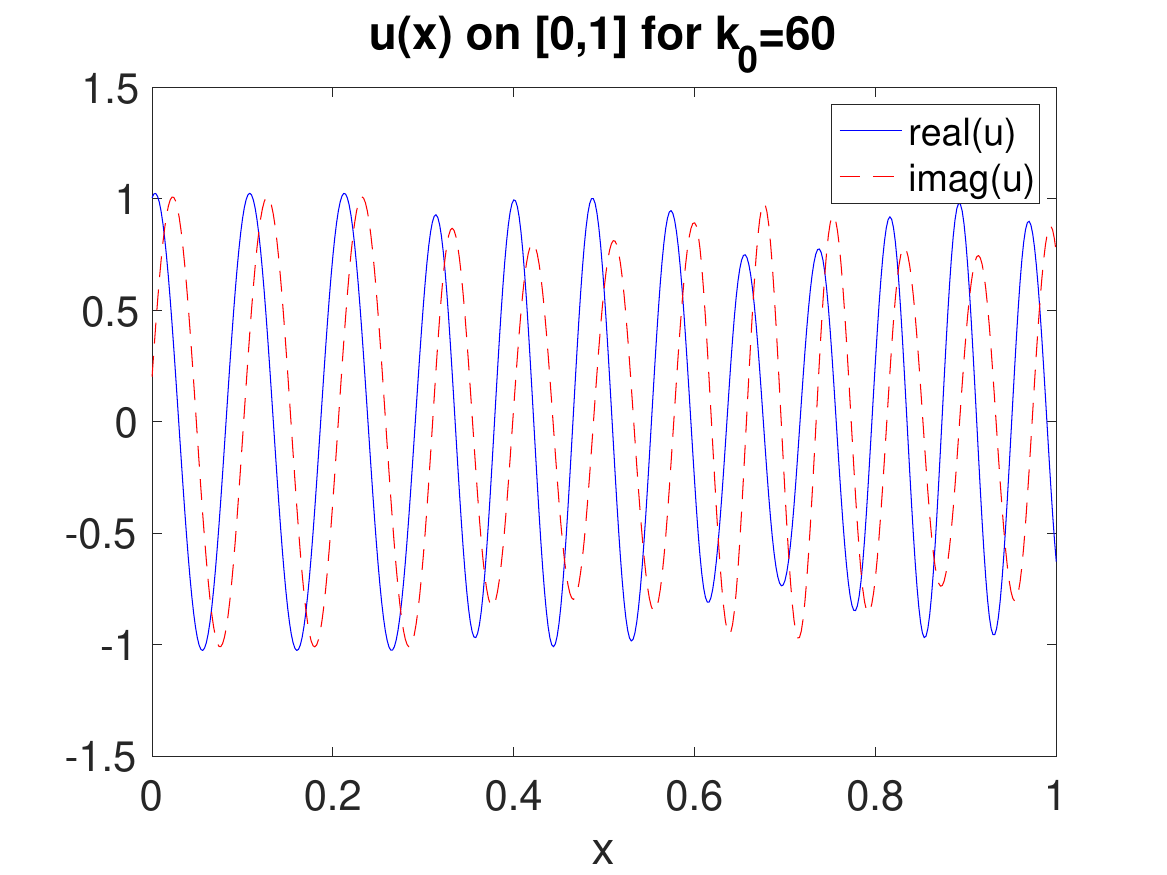}
\caption{\label{NLH1}  Shown above are solution plots for NLH with varying linear wave number $k_0$.}
\end{center}
\end{figure}

Even in 1D, NLH is a very challenging problem, especially for larger values of $k_0$ since this increases the strength of the cubic nonlinearity \cite{BFT07,BFT09,FT01}.  NLH has recently become a benchmark for testing of nonlinear solvers and acceleration methods, e.g. \cite{PR25,YTA22,PR21}.  

We consider the Picard iteration for solving NLH, which is given by
\begin{align}
\frac{d^2 u_{j+1}}{dx^2} + k_0^2 \left( 1 + \epsilon(x)  |u_j|^2 \right) u_{j+1} & = 0, \ \ \ 0<x<1, \label{nlh1} \\
\frac{du_{j+1}}{dx} + i k_0 u_{j+1} & = 2 i k_0,\ \ \ x=0, \label{nlh2} \\
\frac{du_{j+1}}{dx} - i k_0 u_{j+1} & = 0,\ \ \ x=1. \label{nlh3}
\end{align}
The Picard iteration \eqref{nlh1}-\eqref{nlh3} is a fixed point iteration, since we can write $u_{j+1}=q(u_j)$ where 
$q$ is the solution operator of \eqref{nlh1}-\eqref{nlh3}.   
Following \cite{BFT07}, we take $u_0=e^{ik_0 x}$.  Note that due to the second derivative in NLH (and the Picard iteration), the natural function space is $H = H^1(\Omega)$ with two way boundary conditions.  The Picard system is discretized using second order finite differences (which is equivalent to using $P_1$ finite elements in this setting) and uniform point spacing of $h=0.002$ using to the iteration.  We approximate the discrete dual norm for the NGMRES optimization problem using 
\[
\| \phi \|_{H'} \approx (\phi, S^{-1} \phi)^{1/2},
\]
where $S$ is the finite difference stiffness matrix with endpoint degrees of freedom removed.  We note with this discretization and problem setup, the Picard iteration converges for $k_0 \le 17$, and diverges for $k_0>18$ (tests omitted).  Linear solves are performed with a direct solver for this problem.

\subsubsection{Illustration of NGMRES convergence theory}

For our first test of NGMRES applied to NLH and its associated Picard iteration, we consider $k_0=20$ and run NGMRES with $m$=2, 5, 10 and 20, as well as the Picard iteration with no acceleration.  Results are shown in Figure \ref{NLH2} for convergence, $\gamma_k$ and $\theta_k$.  For convergence, we observe that unaccelerated Picard fails to converge, while all NGMRES-Picard tests converge.  Convergence improves as $m$ is increased, except $m=10$ and $m=20$ perform about the same.  Acceleration coefficients are shown in Figure \ref{NLH2} at the center, and as expected we see that overall, the acceleration coefficients get smaller as $m$ gets larger.  Shown on the right in Figure \ref{NLH2} are plots of the actual convergence rate $\frac{\| g(u_{k}) \|_{H'}}{ \| g(u_{k-1}) \|_{H'}}$ and predicted linear convergence rate $\theta_k$ for $m=5$.  We observe that after about the tenth iteration, $\theta_k$ is a very good predictor of the actual convergence rate; once the nonlinear residual is sufficiently small, the higher order terms in the convergence estimate become negligible and $\theta_k$ becomes a very good predictor of the convergence rate at iteration $k$.

\begin{figure}[ht!]
\begin{center}
\includegraphics[width = .32\textwidth, height=.32\textwidth,viewport=0 0 530 430, clip]{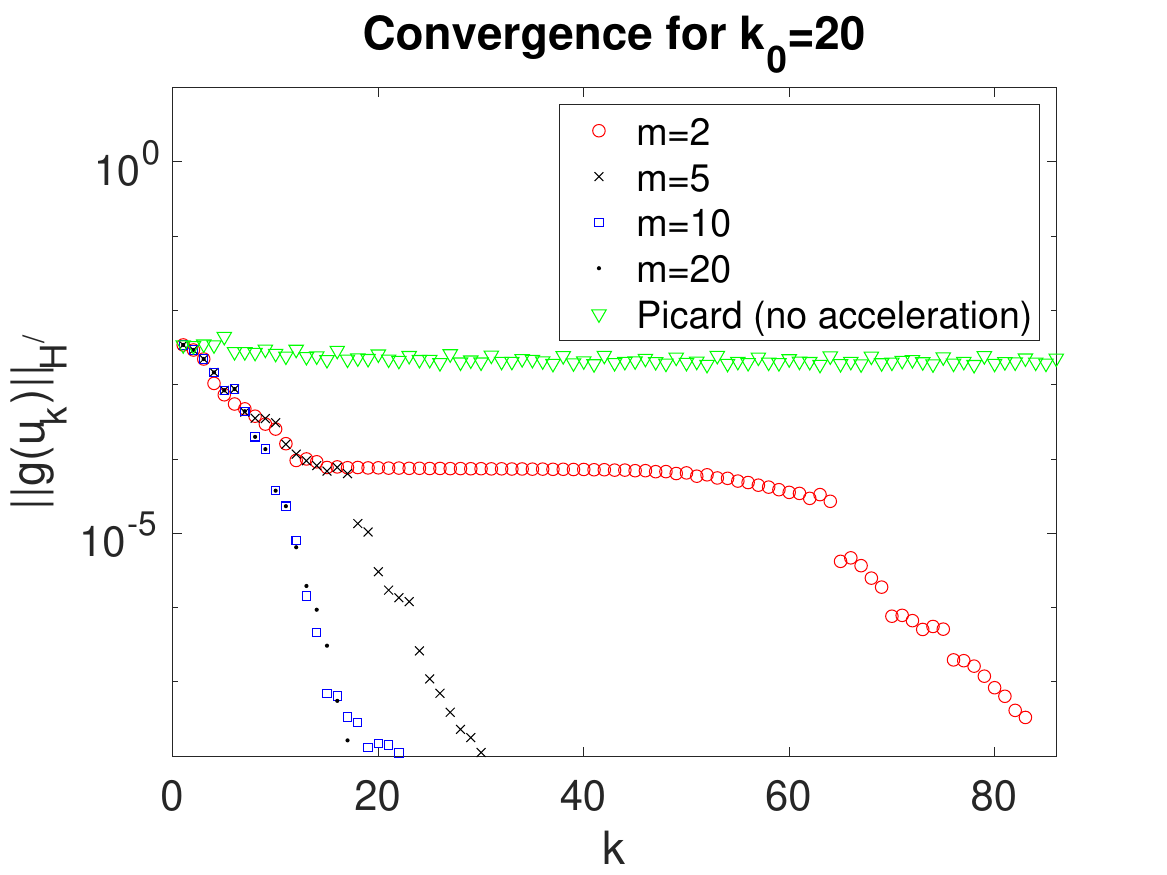}
\includegraphics[width = .32\textwidth, height=.32\textwidth,viewport=0 0 530 430, clip]{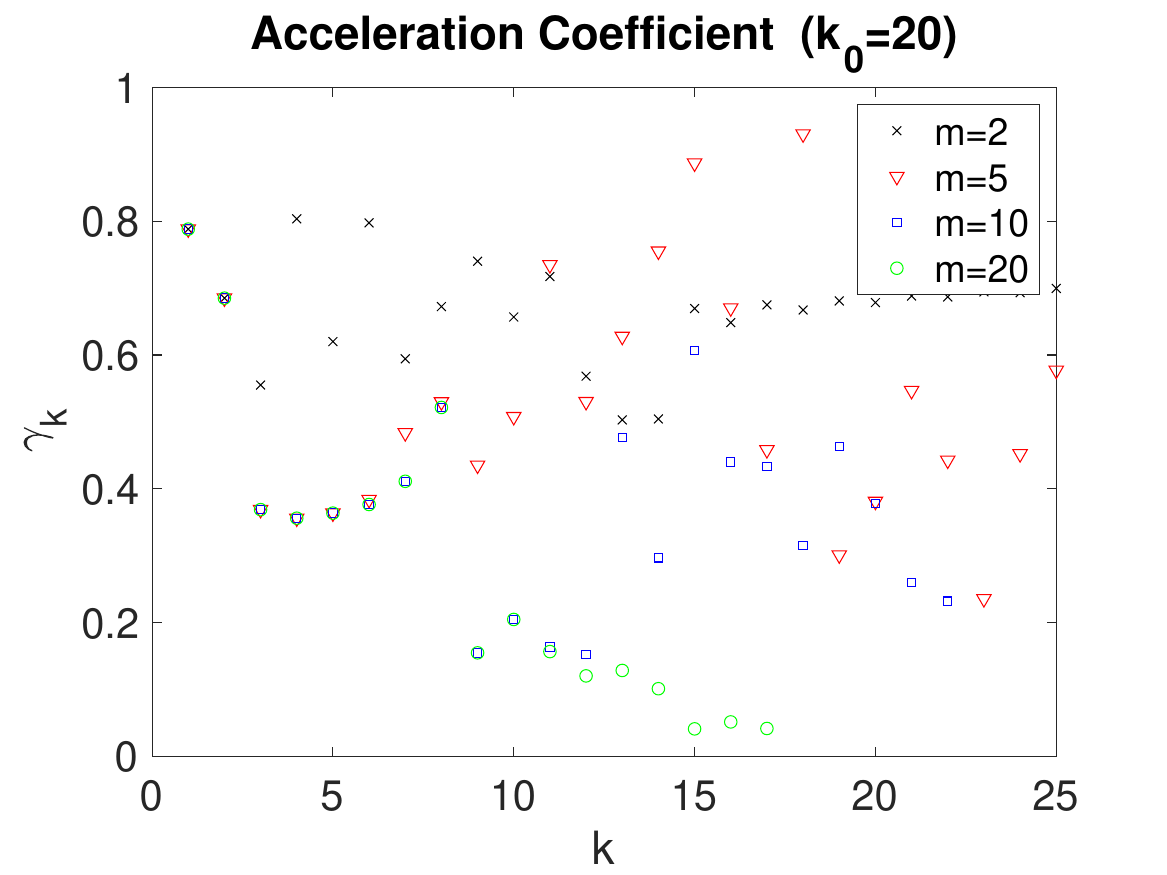}
\includegraphics[width = .32\textwidth, height=.32\textwidth,viewport=0 0 530 430, clip]{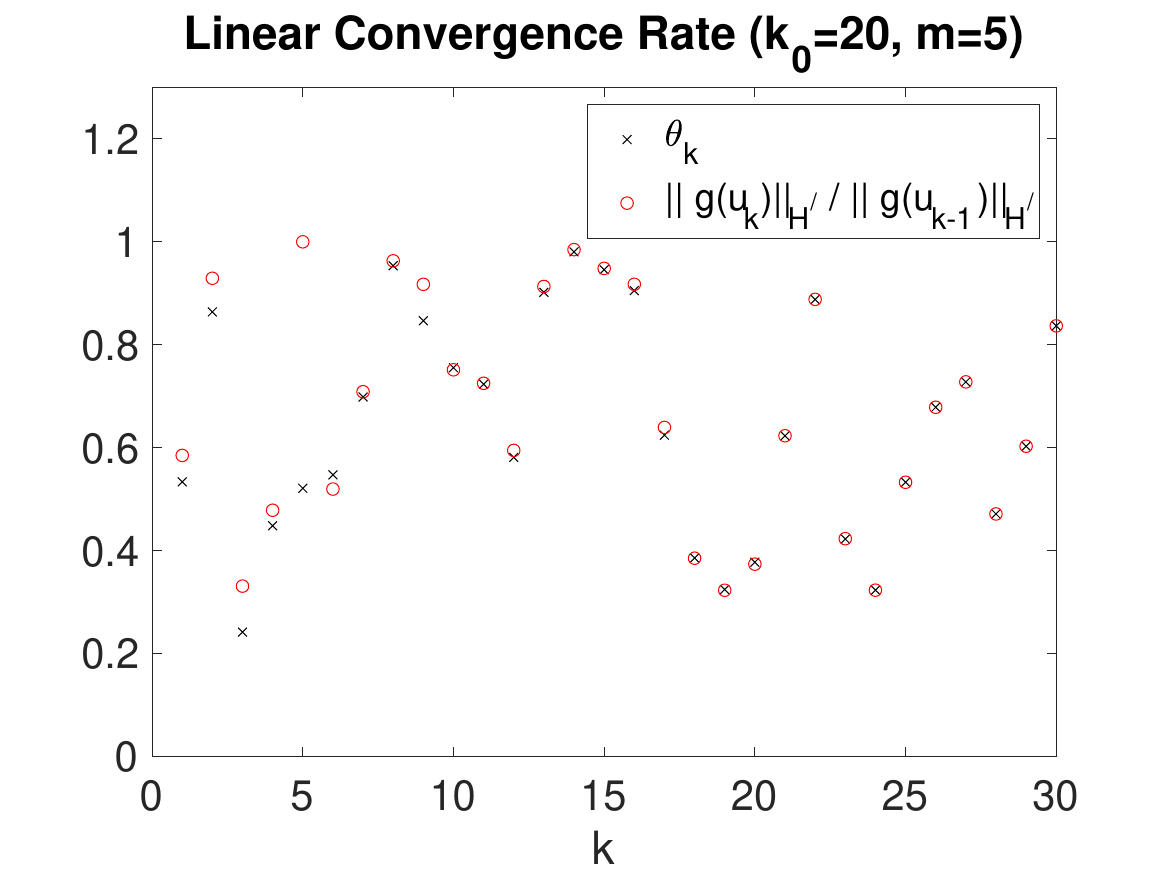}
\caption{\label{NLH2}  Shown above are plots of NGMRES applied to NLH-Picard with $k_0=20$ and varying $m$: convergence (left), acceleration coefficients $\gamma_k$ (center), and a comparison of the actual convergence rate $\| g(u_{k+1}) \|_{H'} / \| g(u_k) \|_{H'}$ with the predicted linear convergence rate $\theta_k$ (right).  }
\end{center}
\end{figure}

\subsubsection{Improving NGMRES with restarts}

\begin{figure}[ht!]
\begin{center}
\includegraphics[width = .32\textwidth, height=.32\textwidth,viewport=0 0 530 430, clip]{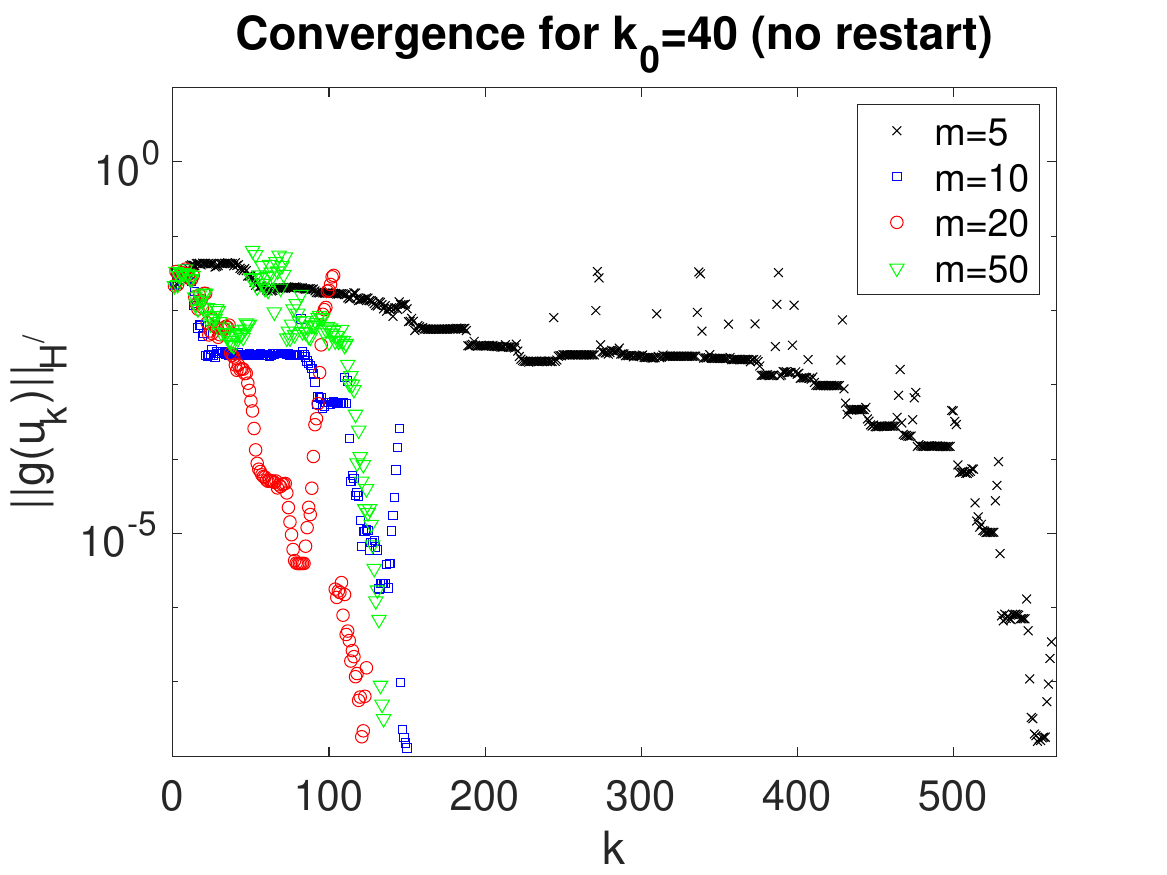}
\includegraphics[width = .32\textwidth, height=.32\textwidth,viewport=0 0 530 430, clip]{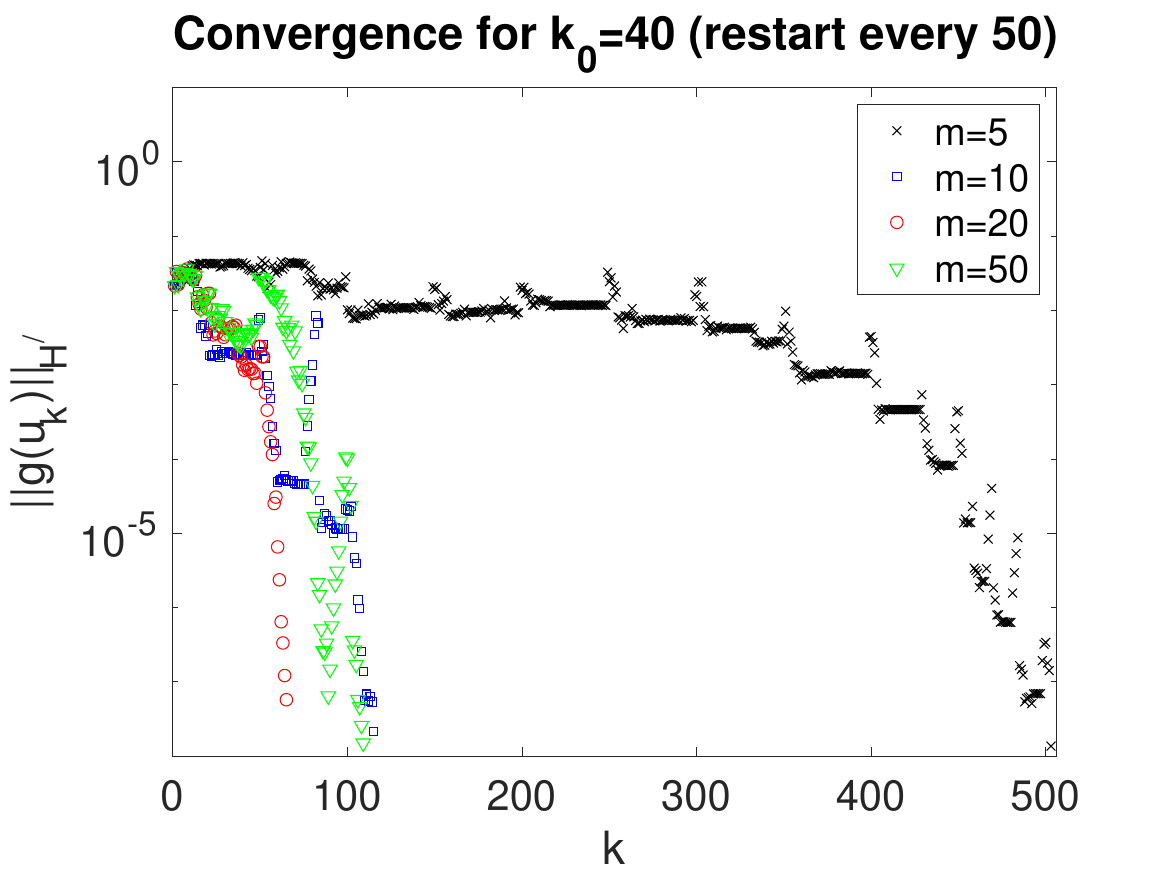}
\includegraphics[width = .32\textwidth, height=.32\textwidth,viewport=0 0 530 430, clip]{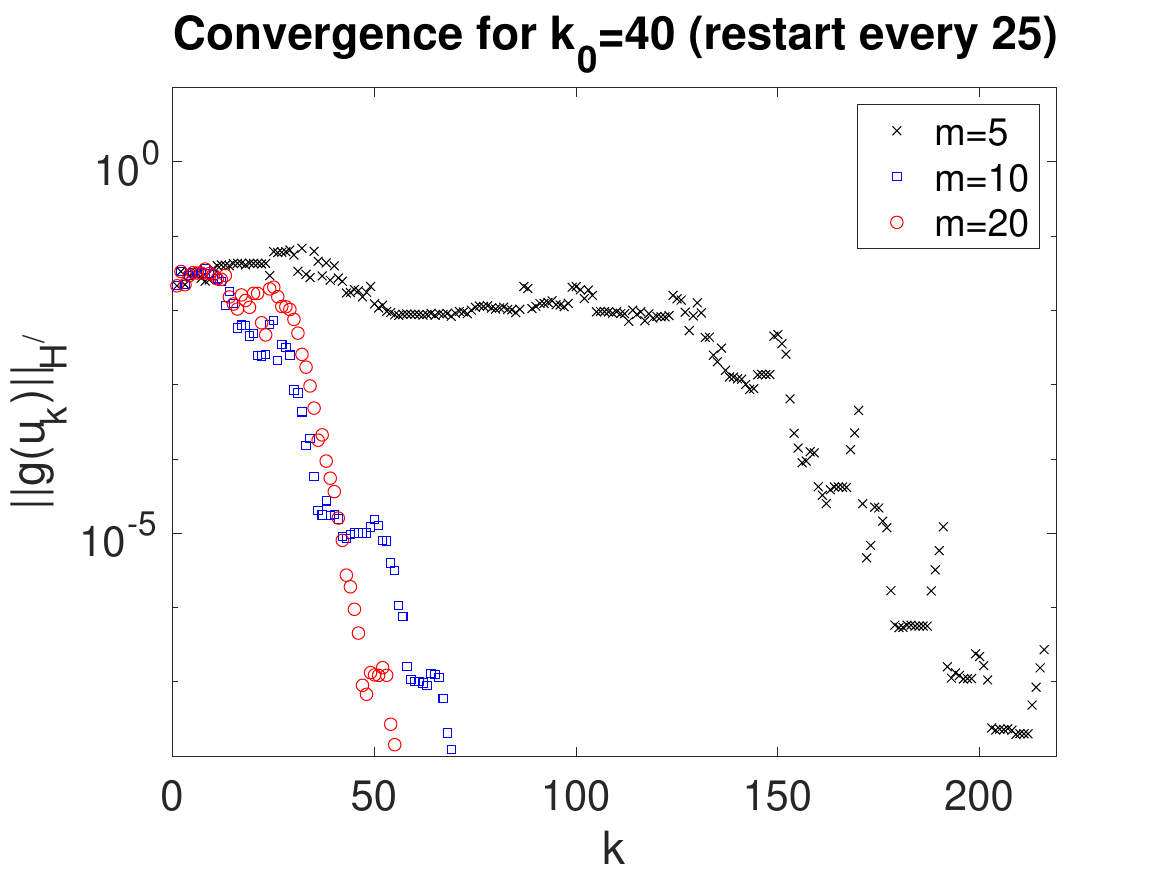}
\caption{\label{NLH3}  Shown above are plots of NGMRES applied to NLH-Picard with $k_0=40$ and varying $m$, with no restarts (left), restarts every 50 iterations (center) and restarts every 25 iterations (right).}
\end{center}
\end{figure}

Theorem \ref{thm:ngmres2} shows that in the noncontractive case, the nonlinear residual bound higher order term sum goes back to the initial iterate.  If sharp, this suggests that gains made on the linear terms can be dominated by the higher order terms as more iterations are performed.  Hence restarts can be a way to mitigate this effect.  To test this, we pick $k_0=40$ and repeat the same NLH test as above with varying $m$.  Results are shown in Figure \ref{NLH3}, and we see that while each of $m$=5, 10, 20 and 50 converge (note $m<5$ failed; plots omitted), convergence is quite erratic when no restarts are used.  Using restarts every 50 iterations improves convergence for each choice of $m$, and also yields somewhat smoother convergence behavior.  Even more improvement is found when restarts are used every 25 iterations.

\subsubsection{Dual norm versus $\ell^2$ for the optimization norm}

Our analysis requires the use of the dual norm $H'$ for the optimization problem, however historically NGMRES users choose the $\ell^2$ norm for convenience and simplicity (but in their defense, until now there was no general convergence theory that suggested using a different norm than $\ell^2$).  

We test NGMRES-Picard for NLH using the $H'$ and $\ell^2$ optimization norms, under three parameter sets: $[k_0$=40, $m$=10, restart=25]; $[k_0$=60, $m$=10, restart=25]; $[k_0$=60, $m$=50, restart=none].  Results are shown in Figure \ref{NLH4}, and we observe that for the easier case of $k_0=40$, results are similar for the $H'$ and $\ell^2$ optimization norm choices.  However, for the harder case of $k_0$=60, the results using the $H'$ norm are much better than those using $\ell^2$; for $m$=10 and restart=25, convergence is achieved in 475 iterations using $H'$ optimization norm while convergence is never reached using $\ell^2$ (in 2000 iterations, plot only shows up to 700 for comparison purposes).  For $m$=50 and no restarts, using optimization norm $H'$ provides convergence in about 450 iteration while it takes 625 for convergence using $\ell^2$.

These results are similar to what was found in \cite{HR26} for NGMRES applied to the Picard iteration for NSE.  That, for easier problems, using the dual norm for the optimization problem or using $\ell^2$ gives similar convergence results.  However for harder problems, especially in 3D, using the dual norm gives much better convergence results.

\begin{figure}[ht!]
\begin{center}
\includegraphics[width = .6\textwidth, height=.4\textwidth,viewport=0 0 900 530, clip]{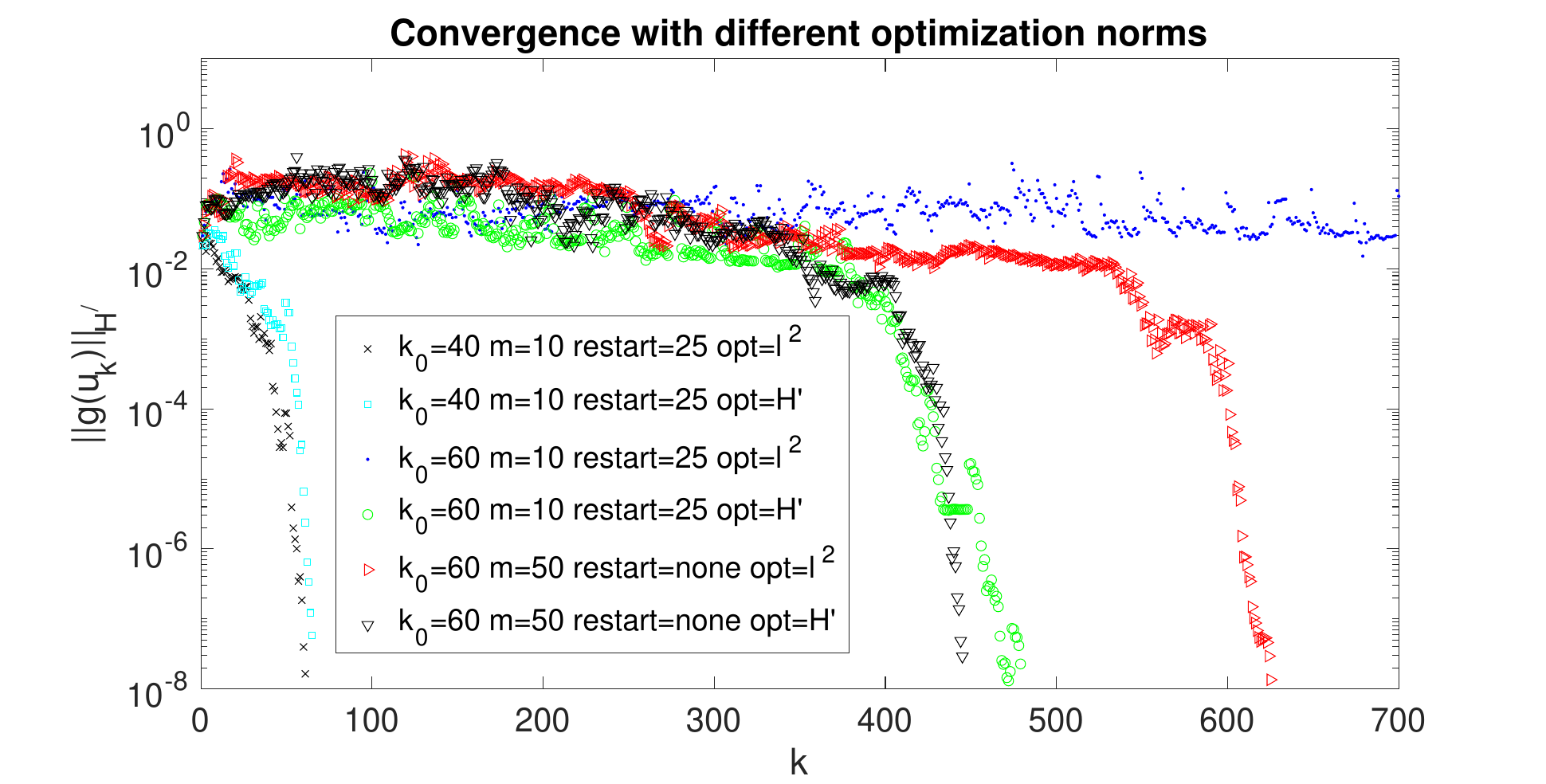}
\caption{\label{NLH4}  Shown above are plots of NMGRES applied to NLH-Picard with varying $k_0$, $m$ and restarts, using the dual norm and the $\ell^2$ norm for the NGMRES optimization problem.}
\end{center}
\end{figure}

\subsection{3D Buoyancy driven flow (Boussinesq equations)}

We next consider tests on flows driven by natural convection or buoyancy.   These flows occur in applications such as ventilation, solar collectors, window insulation, cooling in electronics,
and many others \cite{CK11}, and are modeled by
the Boussinesq system: in $\Omega\subset\mathbb{R}^d$ ($d$=2 or 3) by
\begin{align}\label{eqn:bouss}
u_t + (u\cdot \nabla ) u - \nu \Delta u + \nabla p 
&= Ri \left( \begin{array}{c} 0 \\ 0 \\ \theta \end{array} \right) + f,
\nonumber \\
\nabla \cdot u &= 0,
\nonumber \\
T_t + (u\cdot \nabla) T - \kappa \Delta T &= F,
\end{align}
with $u$ representing the velocity field, $T$ the temperature 
(or density), $p$ the pressure, and $f$ and $F$ the external forces.  The kinematic viscosity $\nu>0$ is defined to be the inverse of the Reynolds number ($Re=\nu^{-1}$), and the thermal conductivity $\kappa$ is given by
$\kappa=Re^{-1}Pr^{-1}$,  with $Pr$ representing the Prandtl number and $Ri$ the Richardson number accounting for the gravitational force.
The associated Rayleigh number is defined by 
\[
Ra=Ri\cdot Re^2 \cdot Pr.
\] 
Larger $Ra$ leads to more complex physical behavior as well as more difficulties in numerically solving the equations.

\begin{figure}[ht!]
\begin{center}
\includegraphics[width = .25\textwidth, height=.25\textwidth,viewport=0 0 530 420, clip]{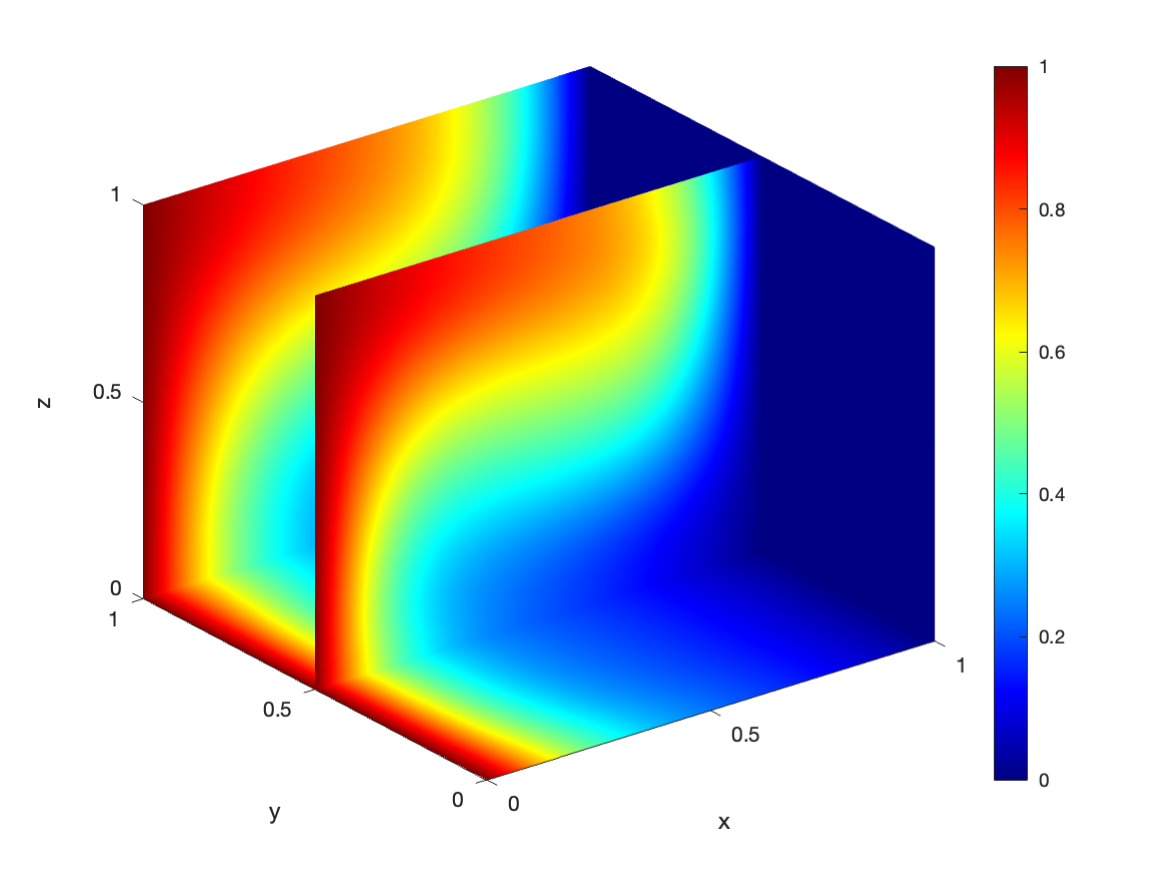}
\includegraphics[width = .73\textwidth, height=.25\textwidth,viewport=100 50 1070 350, clip]{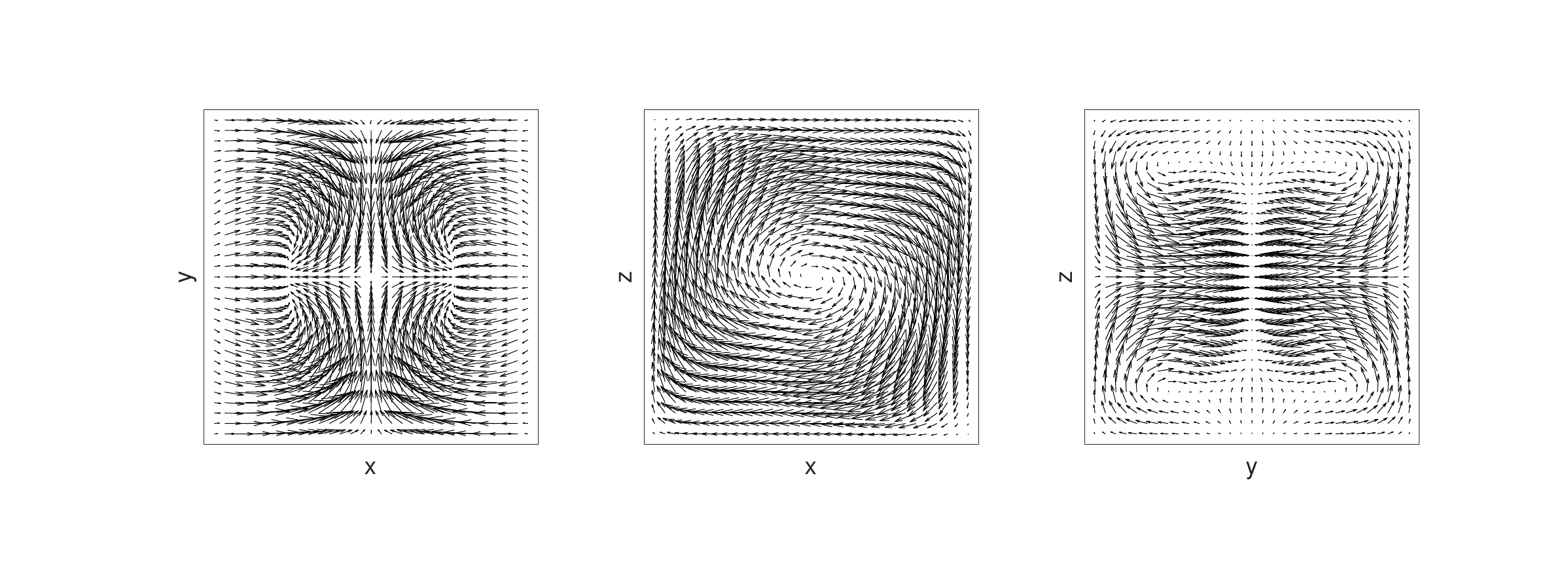}\\
\includegraphics[width = .25\textwidth, height=.25\textwidth,viewport=0 0 530 420, clip]{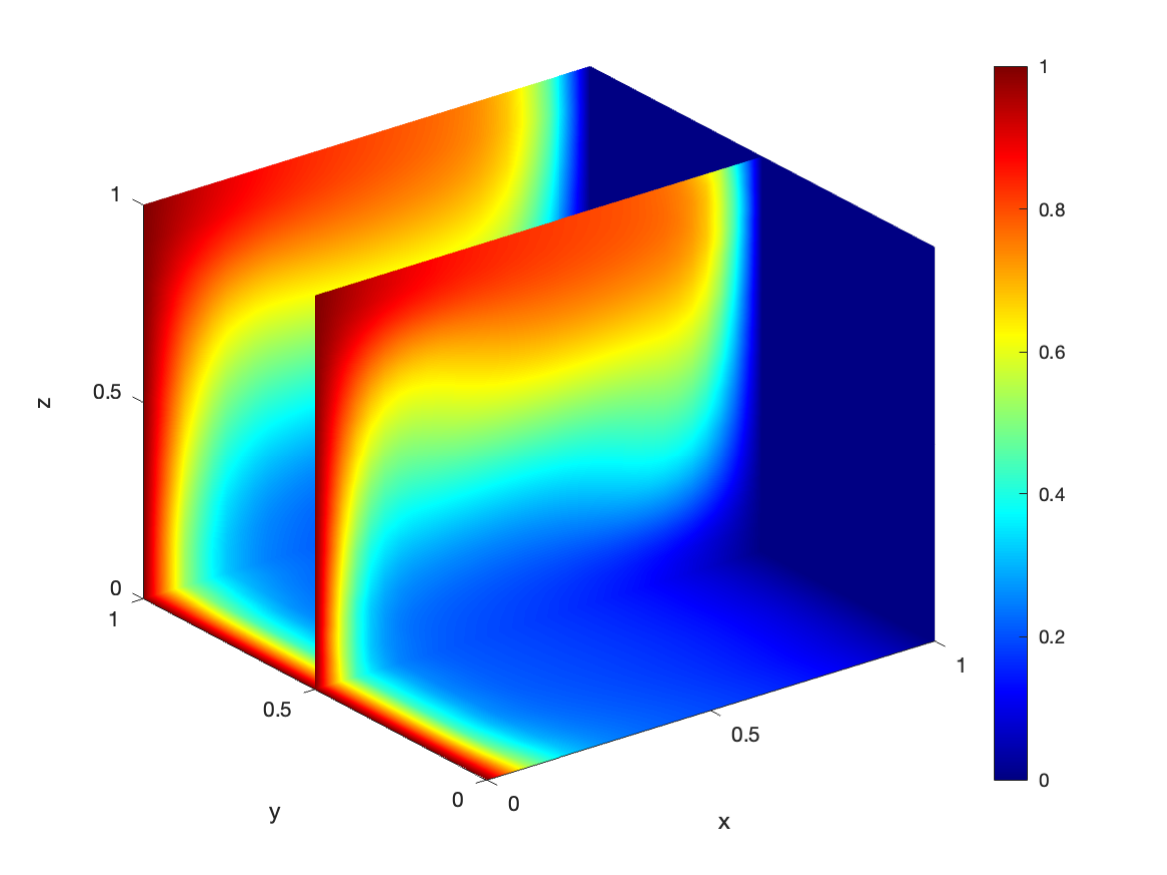}
\includegraphics[width = .73\textwidth, height=.25\textwidth,viewport=100 50 1070 350, clip]{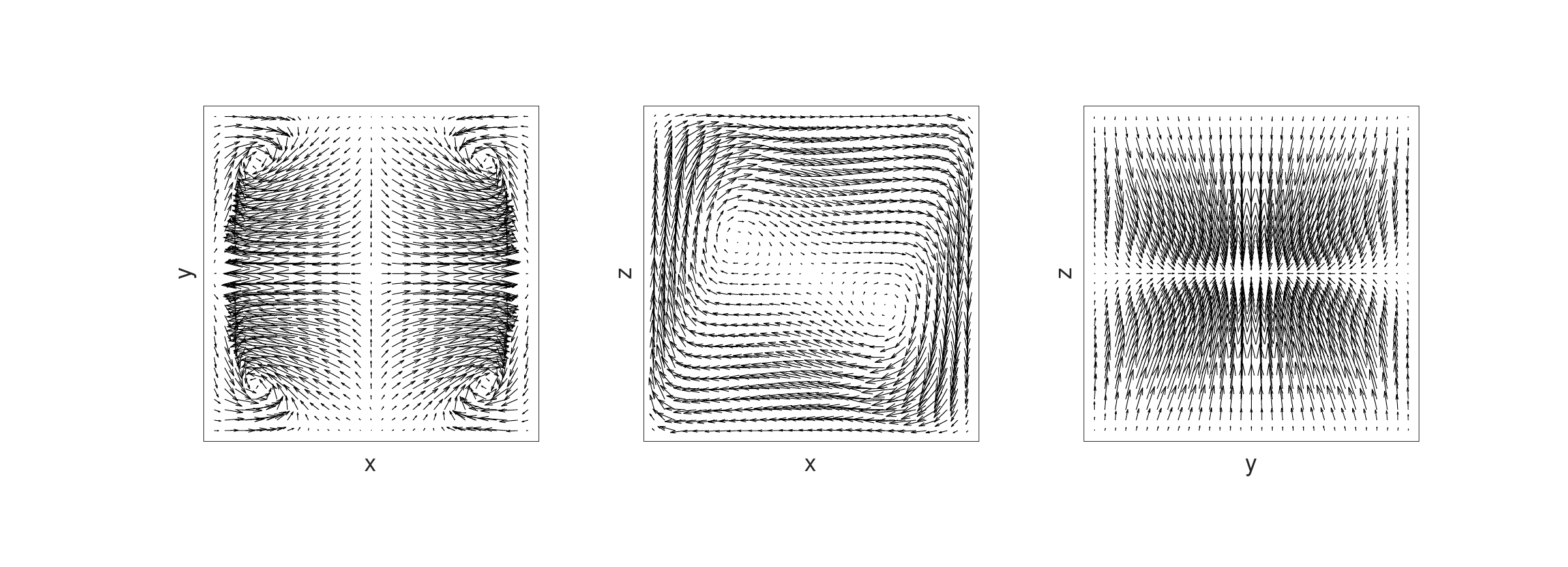}
\caption{\label{B0}  Shown above are solution plots of the Boussinesq 3D differentially heated cavity test problem with $Pr=0.71$ as temperature (left) and midsliceplane velocity fields (right), with $Ra=10000$ (top) and $Ra=100000$ (bottom).}
\end{center}
\end{figure}

For a test example, we use the 3D differentially heated cavity benchmark problem, where $\Omega=(0,1)^3$, there is no external forcing ($f=0$, $F=0$), no slip velocity boundary conditions are enforced on all sides, and for temperature we strongly enforce $T(0,y,z)=1$, $T(1,y,z)=0$ and perfect insulation $(\nabla T\cdot n=0)$ is weakly enforced on the other four sides.  We set $Ri=1$, $Pr=0.71$ to model air, and then vary $Re$ in order to vary $Ra$.

We solve directly for steady solutions (i.e. $u_t=0$ and $T_t=0$) using the Picard iteration:  Given $u_0$, define $u_{k},T_{k}$, $k\ge 1$, to be solutions of the (decoupled) system
\begin{align}
(u_{k-1}\cdot \nabla ) u_k - \nu \Delta u_k + \nabla p_k & = Ri [ 0;T_{k} ], \label{it1a} \\
\nabla \cdot u_k &= 0, \label{it2a} \\
(u_{k-1}\cdot \nabla) T_k - \kappa \Delta T_k & = 0, \label{it3a}
\end{align}
and the solutions also satisfy the boundary conditions discussed above.  

Note that \eqref{it3a} can be decoupled and solved to obtain $T_k$, and then an Oseen solve of \eqref{it1a}-\eqref{it2a} can be performed for $u_k$ and $p_k$.  We note that for Boussinesq, one iteration of Picard is much cheaper than one Newton iteration, which would include the terms $+(u_k-u_{k-1})\cdot\nabla u_{k-1}$ on the left hand side of the momentum equation and $+(u_k-u_{k-1})\cdot\nabla T_{k-1}$ on the left hand side of the energy equation.  Hence, the Newton linear systems would be fully coupled and much more difficult to efficiently and reliably solve.  Thus, accelerating the Picard iteration for Boussinesq is highly desirable.

We discretize velocity-pressure using $(P_3,P_2^{disc})$ Scott-Vogelius (SV) elements on a tetrahedral grid constructed with a rectangular box grid generated by $12^3$ Chebyshev points that is then refined by splitting each box into 6 tetrahedra and finally barycenter refining each of those tetrahedra.  This SV pair is known to be inf-sup stable on such a grid \cite{Z05}.  For the temperature, we use $P_3$ on this same mesh, and in total the discretization yields 954K degrees of freedom (dof).  The temperature linear solves are done directly, and for the Oseen solves we use an augmented Lagrangian type preconditioner for GMRES following \cite{benzi,HR13,BL12}, with direct inner solves.  

The natural function spaces of velocity and temperature are (after equivalently switching to homogeneous Dirichlet boundary conditions for temperature on the walls and adjusting the temperature equation forcing appropriately),
\[ 
V = \{ v\in H^1_0(\Omega)^3, \ \nabla \cdot v=0\},\ \ \
X = \{ v\in H^1(\Omega), v(x,y,z)=0 \mbox{ for } x=0,1\}.
\]
We denote $q_B: V\times X \rightarrow V\times X$ as the solution operator of \eqref{it1a}-\eqref{it3a} and thus also the fixed point function: $[u_{k+1},T_{k+1}]=q_B([u_k,T_k])$.  Note the pressure and pressure space is removed since the SV pair is inf-sup stable and thus the divergence-free velocity subspace can be used.  We denote $g_B: V\times X \rightarrow V'\times X'$ to be the nonlinear residual operator.

We approximate the velocity and temperature dual space norms for the NGMRES optimization using
\[
\| \phi \|_{V'} \approx (\phi,A_h^{-1} \phi)^{1/2},\ \ \| \chi \|_{X'} \approx (\chi,S_h^{-1} \chi)^{1/2},
\]
where $A_h$ is the discrete Stokes operator and $S_h$ is the stiffness matrix, each with its associated Dirichlet boundary dof removed.  Hence the optimization dual norm used is
\[
\| (\phi,\chi) \|_{B'} := \left( \| \nabla \phi \|_{V'}^2 + \| \nabla \chi \|_{X'}^2 \right)^{1/2}.
\]
This norm is also used for measuring convergence in our tests.  We note that the Picard iteration converges for $Ra$=10000 in 85 iterations, and for $Ra\ge 25000$ it does not converge in 200 iterations (and shows no indication that it will ever converge).  

\subsubsection{Illustration of NGMRES convergence theory}

\begin{figure}[ht!]
\begin{center}
\includegraphics[width = .32\textwidth, height=.32\textwidth,viewport=0 0 530 430, clip]{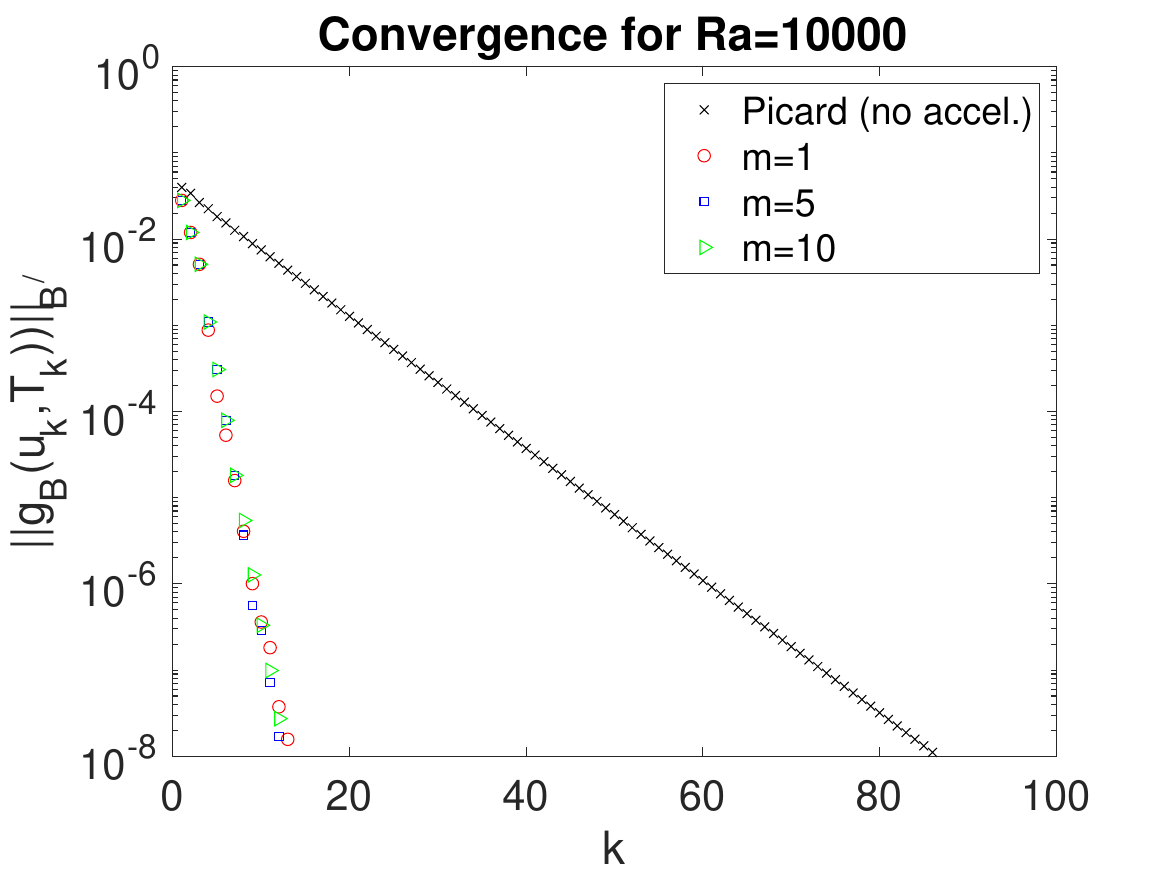}
\includegraphics[width = .32\textwidth, height=.32\textwidth,viewport=0 0 530 430, clip]{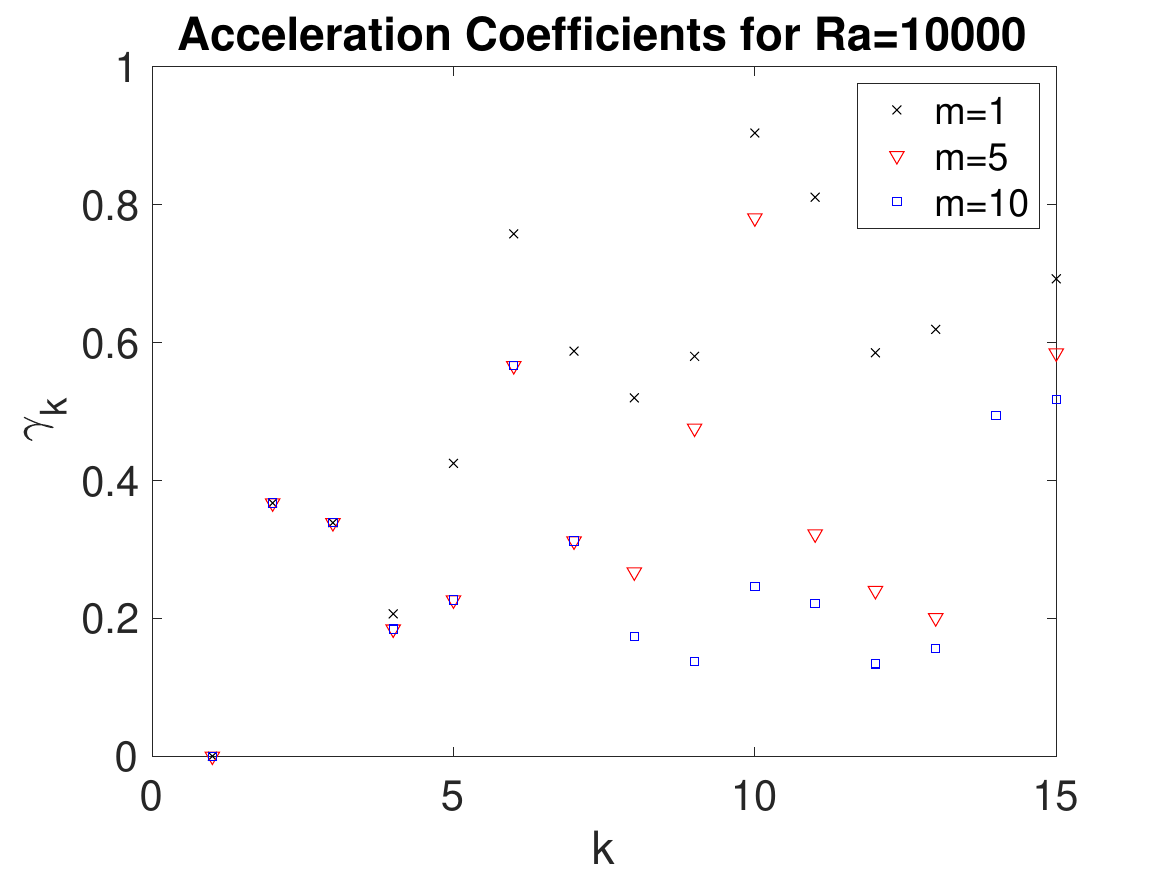}
\includegraphics[width = .32\textwidth, height=.32\textwidth,viewport=0 0 550 430, clip]{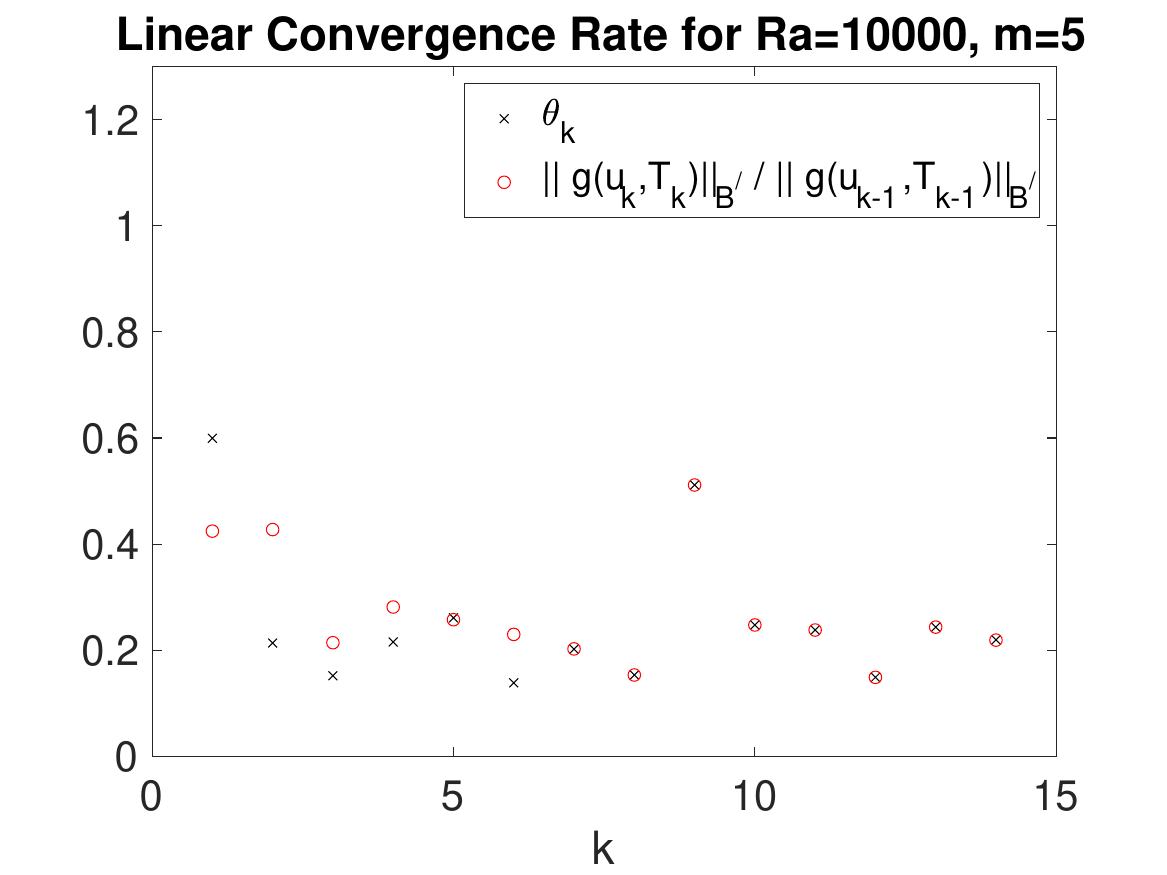}\\
\includegraphics[width = .32\textwidth, height=.32\textwidth,viewport=0 0 530 430, clip]{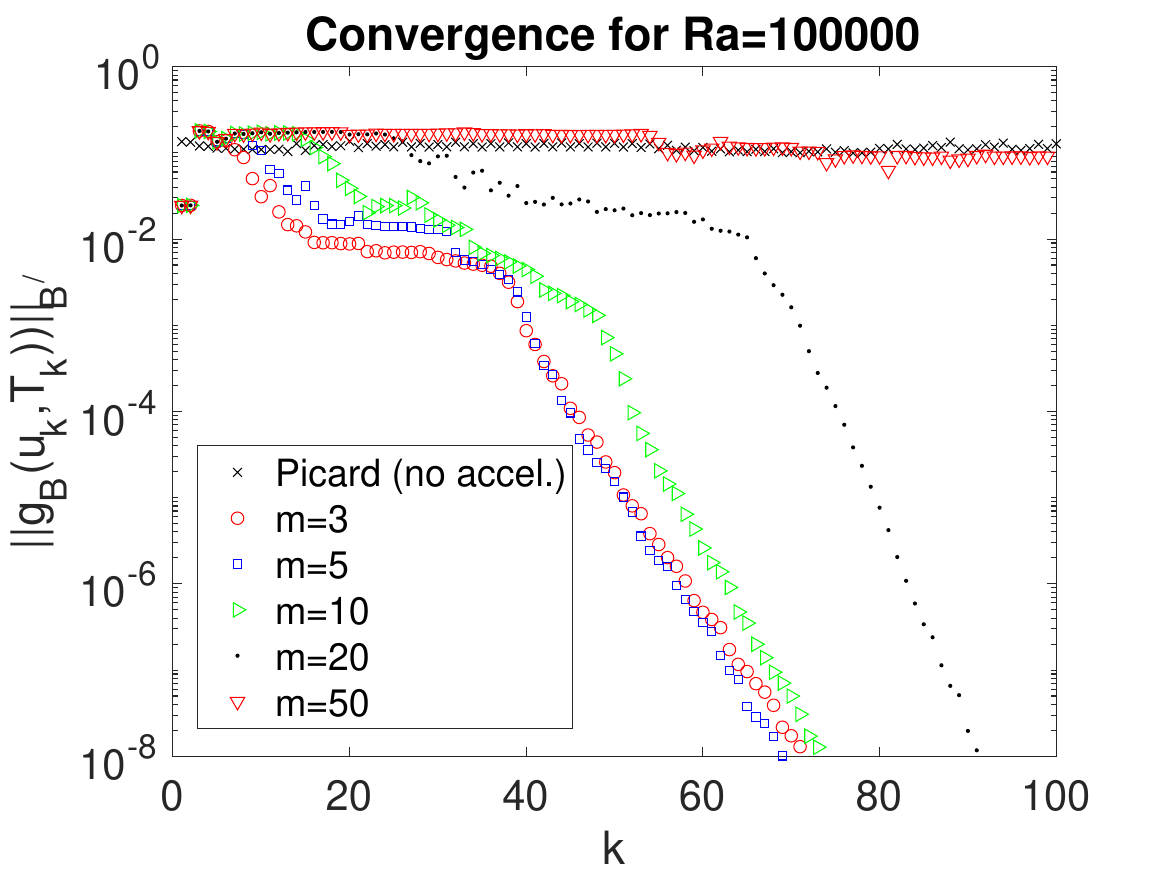}
\includegraphics[width = .32\textwidth, height=.32\textwidth,viewport=0 0 530 430, clip]{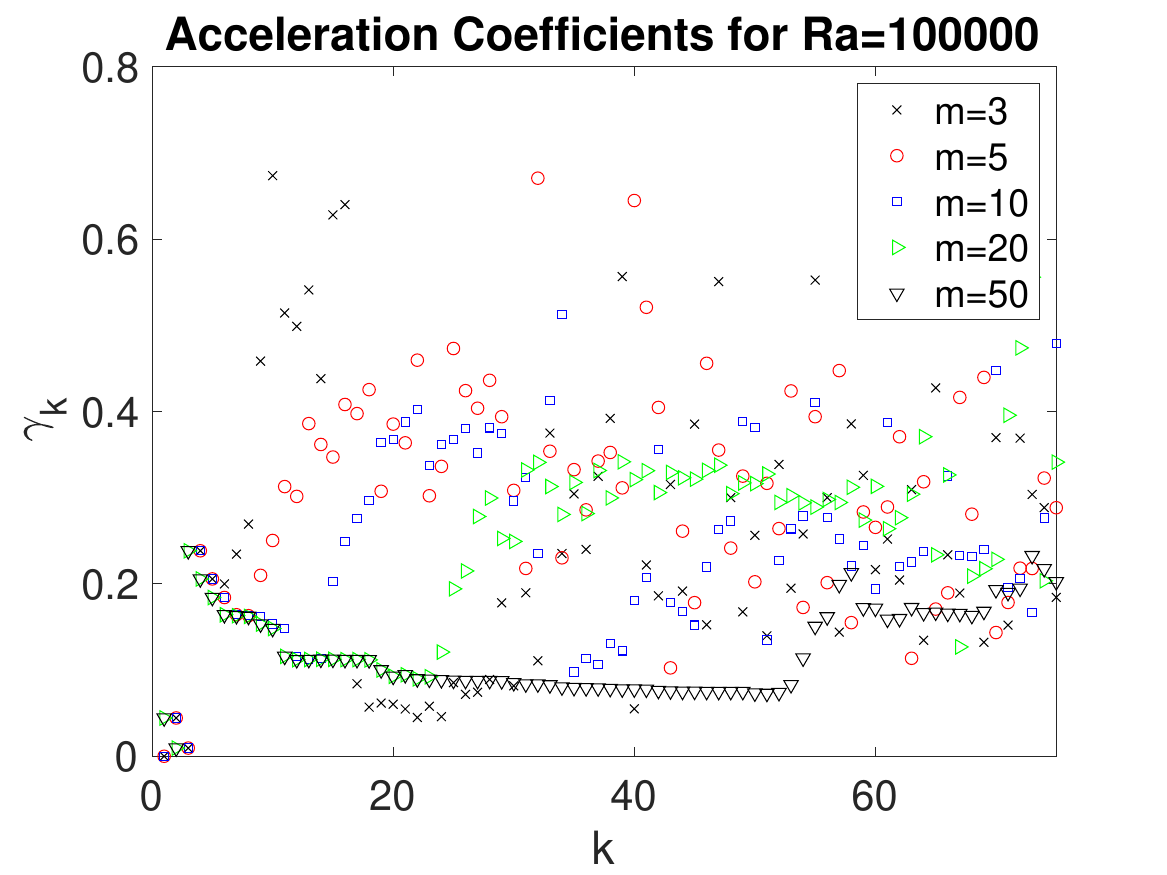}
\includegraphics[width = .32\textwidth, height=.32\textwidth,viewport=0 0 550 430, clip]{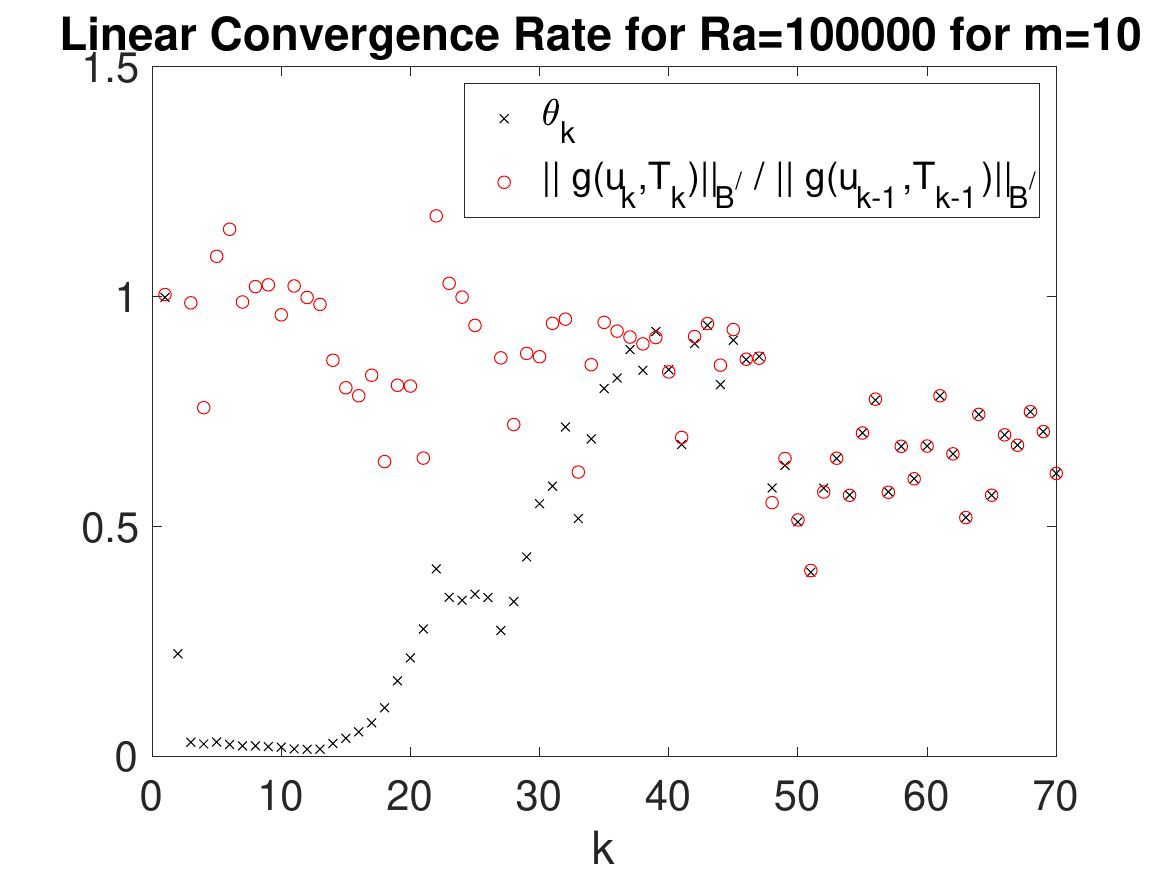}
\caption{\label{B2}  Shown above are results for  NGMRES-Picard for Boussinesq with $Ra=10000$ (top) and $100000$ (bottom) and varying $m$: convergence (left), acceleration coefficients $\gamma_k$ (center), and a comparison of the actual convergence rate $\frac{ \| g(u_{k},T_{k}) \|_{B'}}{ \| g(u_{k-1},T_{k-1}) \|_{B'}}$ with the predicted linear convergence rate $\theta_k$ (right). }
\end{center}
\end{figure}

For our first test with the Boussinesq system, we compute NGMRES-Picard with varying $m$ and $Ra$=10000 and 100000.  Results are shown in Figure \ref{B2}.  For $Ra$=10000, we observe that the unaccelerated Picard iteration converges in about 85 iterations, while NGMRES-Picard converges in about 15 iterations for each of $m=1,5,10$.  Acceleration coefficients $\gamma_k$ for $Ra$=10000 are as expected in that we see an overall decrease as $m$ increases.  We also observe that $\theta_k$ becomes an excellent predictor for the convergence rate at each step, after about the 7th iteration.

For $Ra$=100000, unaccelerated Picard fails to converge.  However, NGMRES-Picard converges for each of $m=3,5,10$, and their respective convergence behaviors are similar.  Worse converge is observed for  $m=20$, with convergence in 90 iterations, while for $m=50$ there is no convergence at all.  The plot of $\gamma_k$'s shows an overall decrease as $m$ increases, but as our theory shows, this only affects the linear convergence rate and for large $m$ the higher order terms can play a significant role in slowing/preventing convergence.  We also observe that for $m=10$, $\theta_k$ agrees well with the actual convergence ratio by about iteration 50.  

\subsubsection{Using $\theta_k$ for an adaptive depth strategy}

\begin{figure}[ht!]
\begin{center}
\includegraphics[width = .32\textwidth, height=.32\textwidth,viewport=0 0 530 430, clip]{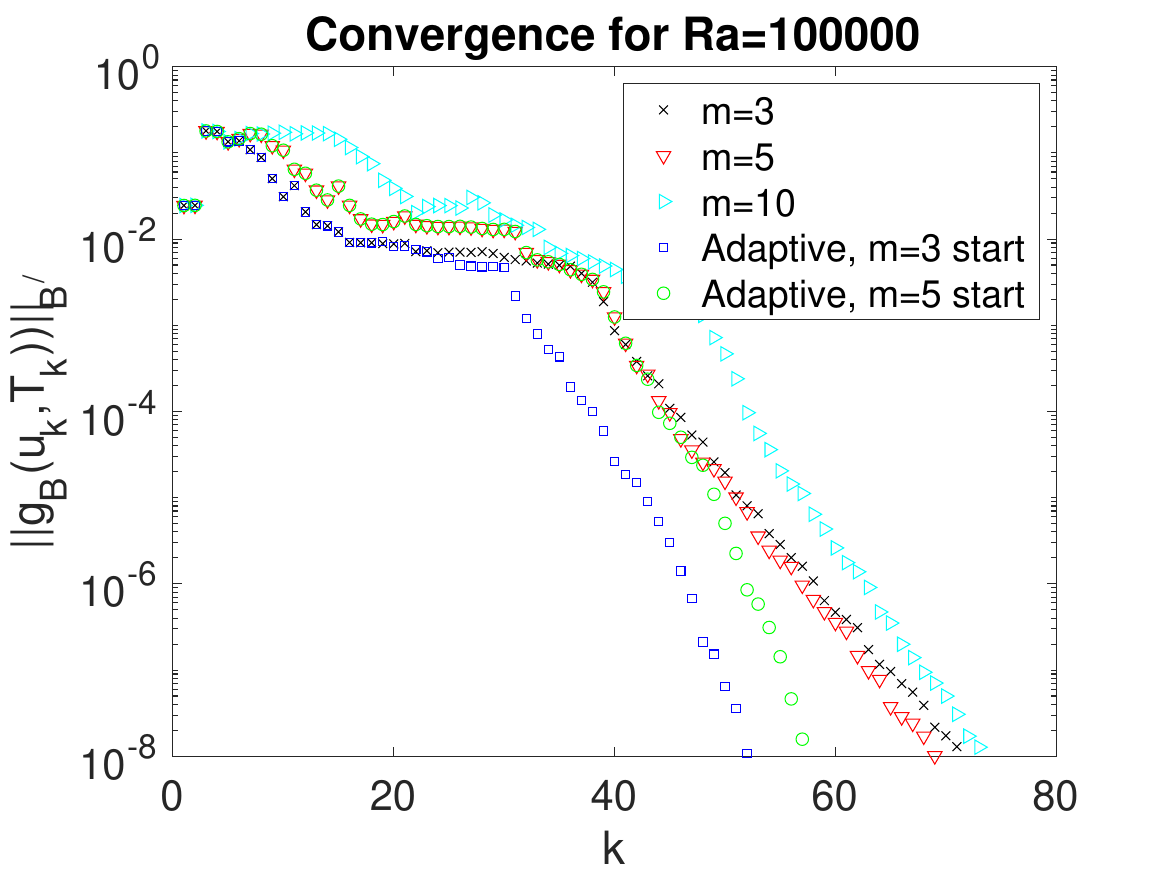}
\includegraphics[width = .32\textwidth, height=.32\textwidth,viewport=0 0 530 430, clip]{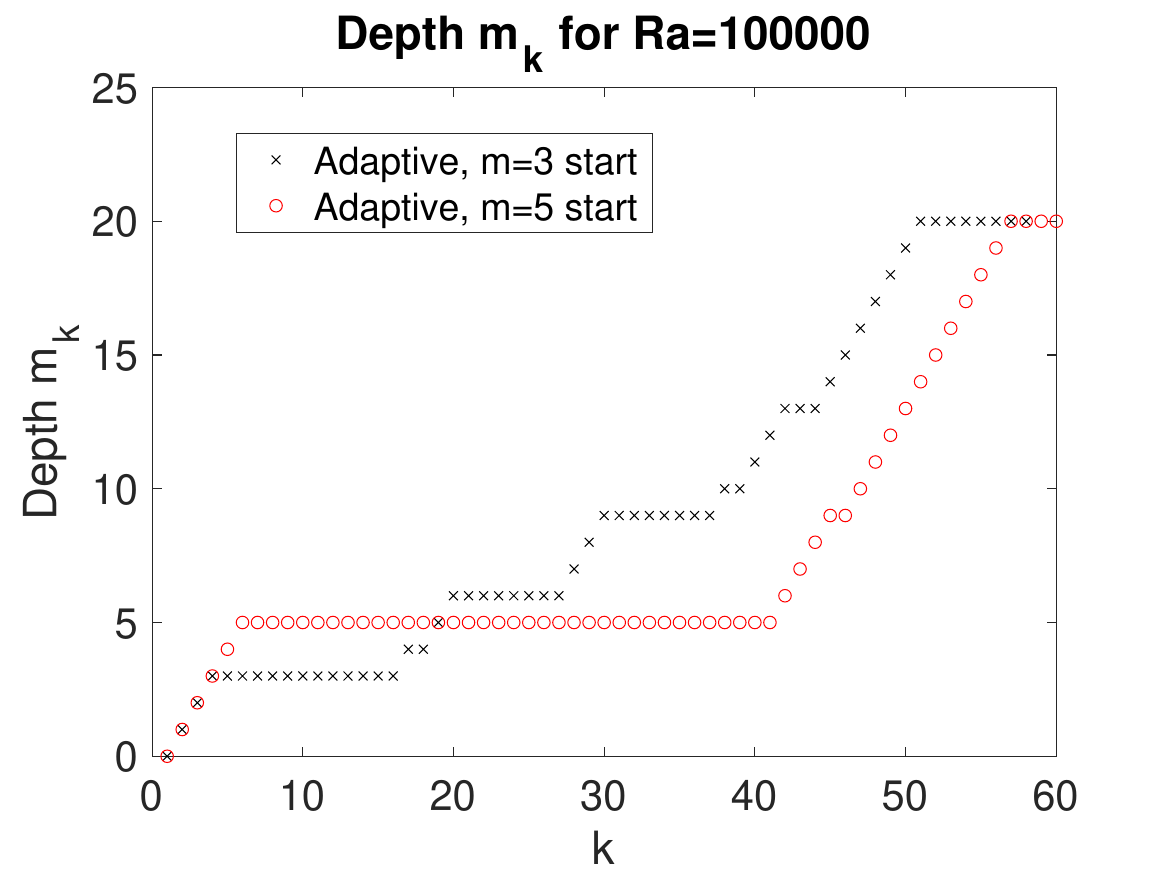}
\caption{\label{B3}  Shown above are results for  NMGRES-Picard for Boussinesq with $Ra=100000$ using constant $m$ and adaptive $m$ (left), and $m_k$ vs $k$ (right). {Here, $m_k= \min \{ m_{max},k \}$.}}
\end{center}
\end{figure}

As observed in our theory and in the numerical tests above, it is often best to use smaller $m$ early in the iteration when the nonlinear residual is large and then larger $m$ once the residual is small.  The key idea is that increasing $m$ improves the linear convergence rate, but at the expense of higher order terms having a greater effect.  Hence once the nonlinear residuals are moderately small, then the higher order terms become negligible and so increasing $m$ will only help.

Based on our theory above in the contractive regime, we can use $\theta_k$'s sharp prediction of the 
linear convergence rate to better choose when to increase $m$.  Following along the lines we proposed  in \cite{HR26b}, we increase $m$ in the following way.  First, a small $m$ is chosen and NGMRES is run as usual.  At each step, we check the inequality
\[
\bigg| \theta_k - \frac{ \| g_B(u_k,T_k)\|_{B'}}{ \| g_B(u_{k-1},T_{k-1})\|_{B'}}\bigg| <0.001.
\]
If this is satisfied, it strongly suggests that the higher order terms are not affecting the convergence rate, and so we increase the maximum $m$ by one at the next iteration.  

Convergence results for this strategy are shown in Figure \ref{B3} for $Ra$=100000.  We observe that by starting with $m=3$, the adaptive strategy leads to convergence in 53 iterations, and starting with $m=5$ gives convergence in 58 iterations.  Convergence results with constant $m$=3,5,10 are also shown for comparison (these are the same  as in Figure \ref{B2} at bottom left), and we observe that the simple adaptive strategy gives a significant speedup in convergence.

\subsection{Incompressible Navier-Stokes equations}

For our third test problem, we consider the incompressible NSE and the channel expansion problem studied by Farrell et al \cite{FBF15} with domain $\Omega=(0,2.5)\times (-1,1) \cup (2.5,150)\times(-6,6)$ representing a narrow channel of width 2 entering into a channel of width 12.  The NSE takes the form:
	\begin{equation}\label{NS1}
 		\left\{\begin{aligned}
 			-\nu \Delta u+u\cdot\nabla u+ \nabla p&=0 \quad \text{in}~\Omega,\\
 			\nabla\cdot {u}&=0\quad \text{in}~\Omega,
 		\end{aligned}\right.
 	\end{equation}
where $u$ and $p$ represent the unknown velocity and pressure, respectively, and $\nu>0$ is the kinematic viscosity which for this test is the inverse of the Reynolds number $Re = \nu^{-1}.$  Dirichlet boundary conditions are enforced as parabolic inflow with max inlet velocity of 1 at $x=0$ and as no-slip on the walls, and zero traction is weakly enforced at the outflow.  We choose $Re$=50 for this test, for which the NSE is known to have 5 distinct solutions \cite{FBF15}, although two of these are symmetric reflections of others (with one solution being self-reflectionally symmetric).  The 5 distinct solutions are shown in Figure \ref{N1}.

State of the art techniques for finding multiple solutions typically use variations of (deflated) Newton-type continuation methods \cite{FBF15}, and so one must start with very low $Re$ and find all solutions there before `climbing the ladder' to slightly higher $Re$, repeating the process until all solutions are found at the desired $Re$.  While generally effective, this process can be slow since many solves need done at lower $Re$.  Hence we aim to find solutions directly at $Re$=50 using NGMRES-Picard, and avoid continuation methods altogether.

\begin{figure}[ht!]
\begin{center}
\includegraphics[width = .8\textwidth, height=.16\textwidth,viewport=170 20 1300 250, clip]{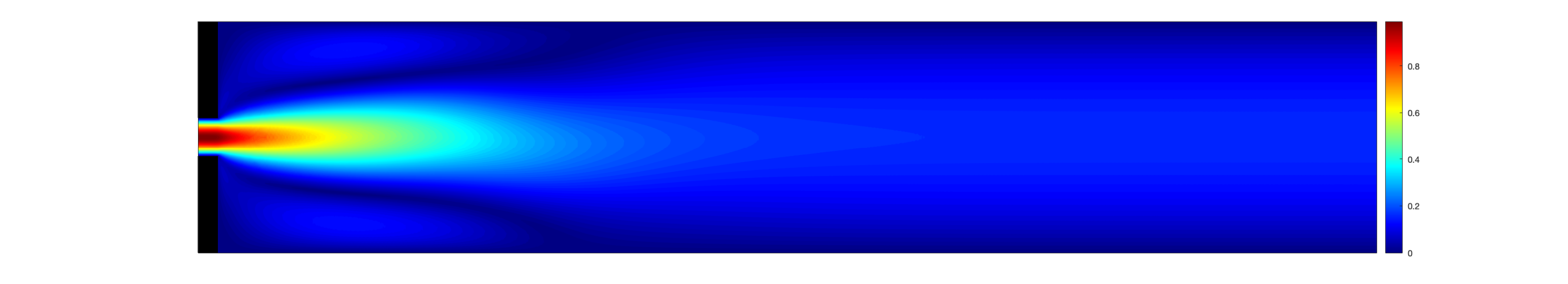}\\
\includegraphics[width = .8\textwidth, height=.16\textwidth,viewport=170 20 1300 250, clip]{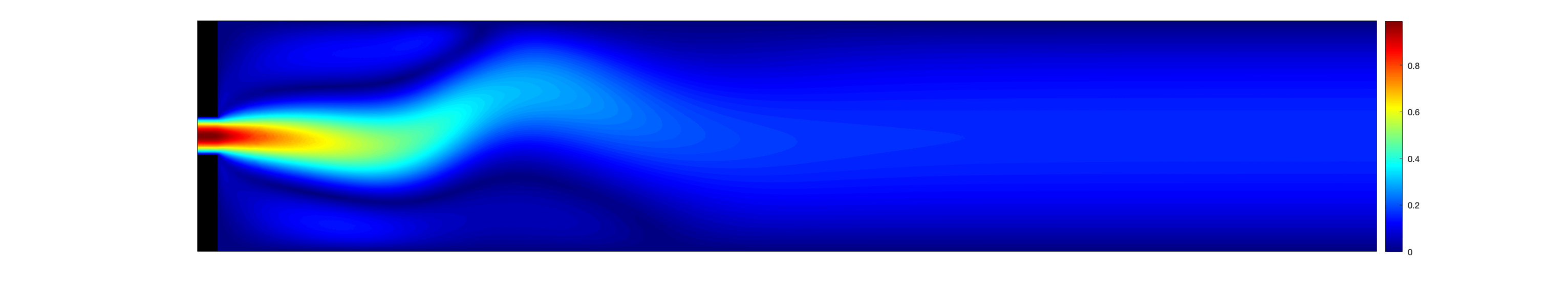}\\
\includegraphics[width = .8\textwidth, height=.16\textwidth,viewport=170 20 1300 250, clip]{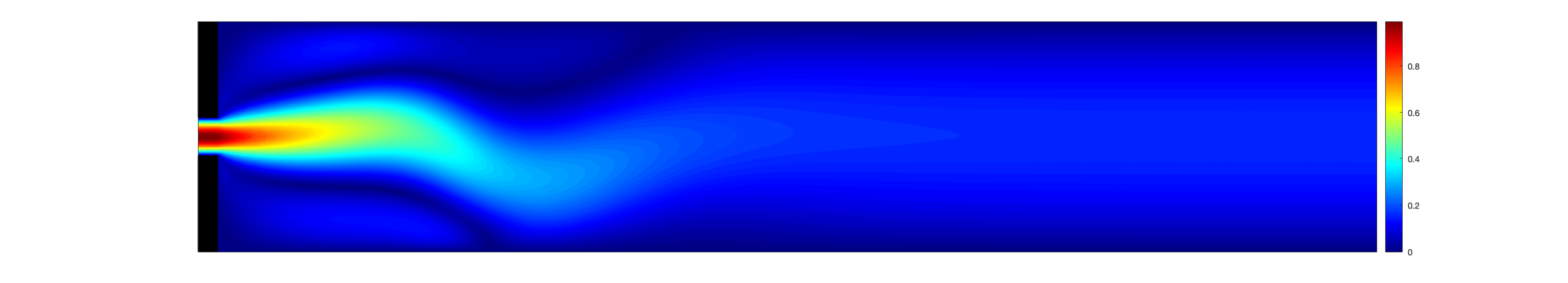}\\
\includegraphics[width = .8\textwidth, height=.16\textwidth,viewport=170 20 1300 250, clip]{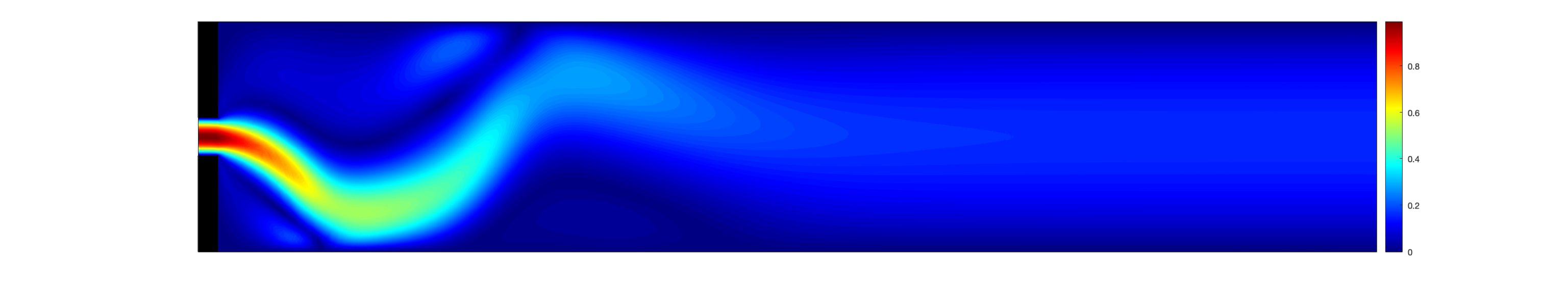}\\
\includegraphics[width = .8\textwidth, height=.16\textwidth,viewport=170 20 1300 250, clip]{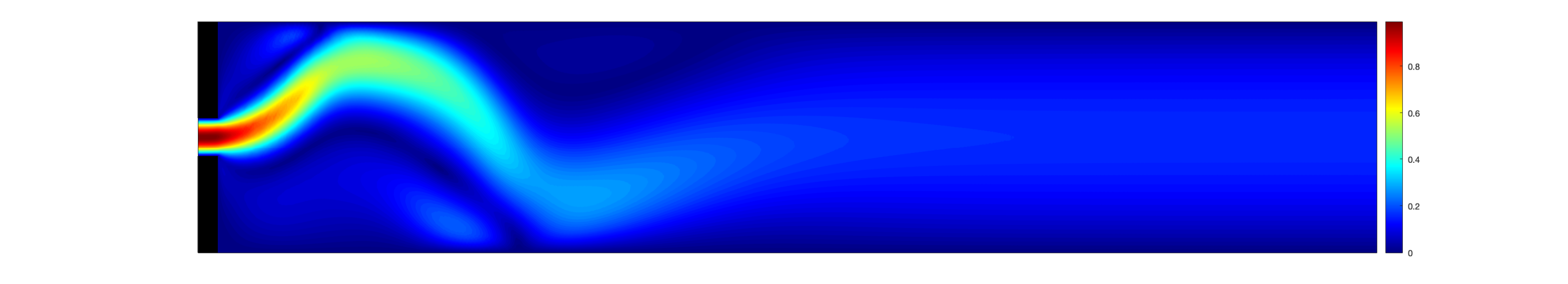}
\caption{\label{N1}  Shown above are the 5 solutions found by the NGMRES deflation method for the NSE channel expansion problem. }
\end{center}
\end{figure}

The Picard iteration for the NSE is given by 
 	\begin{equation}\label{Pic1}
		\left\{\begin{aligned}
 		-\nu \Delta u_{k+1}+u_k\cdot\nabla u_{k+1}+ \nabla p_{k+1}&=0, \\
 		\nabla\cdot {u}_{k+1}&=0,
 		\end{aligned}\right.
		\end{equation}	
together with the boundary conditions given above.  We now use NGMRES-Picard together with deflation and to directly find all five solutions (and note it is enough to find 3 solutions not reflectionally symmetric of each other).

To use NGMRES-Picard to directly find still unknown solutions, we simply rescale the NGMRES nonlinear residual at each step by
\[
\| g(w) \|_{V'} \rightarrow \frac{ \| g(w) \|_{V'}}{\sum_{j=1}^n \| w - u_i \|_{V}^3  },
\]
where $V$ is the divergence-free velocity space and $V'$ is its dual, and $n$ is the number of velocity solutions $\{ u_i \}_{i=1}^n$ already found.  Such a deflation rescaling is suggested in \cite{FBF15}, and we find for this problem the cubic power works well.  The general idea is that if the iteration gets close to $u_i$, the residual will grow and push the iteration away through the optimization problem.  

Hence the algorithm is to run NGMRES-Picard until a solution is found, and then rerun using the adjusted nonlinear residual until all solutions are found.  Initial guesses are random perturbations of the Stokes solution.  We set a NGMRES-Picard nonlinear residual convergence tolerance to $10^{-5}$ in the $V'$ norm, and this solution is then passed to Newton to quickly zoom in on the solution.  We used $m=10$ with restarts every 30 iterations, and 100 max iterations per run.

\begin{figure}[ht!]
\begin{center}
\includegraphics[width = .95\textwidth, height=.2\textwidth,viewport=250 0 1900 250, clip]{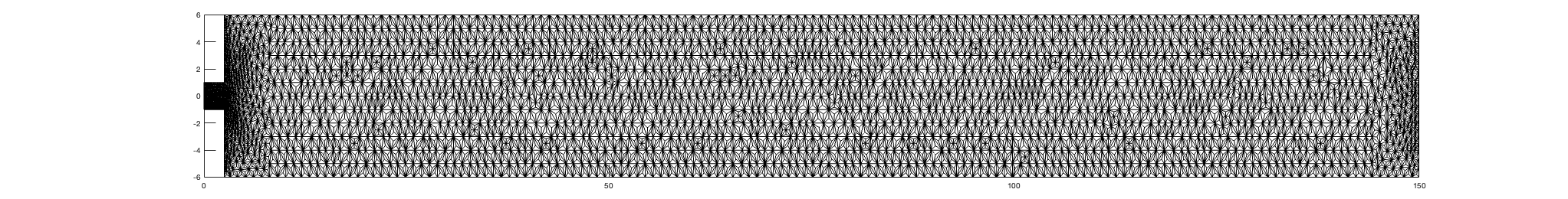}\\
\includegraphics[width = .5\textwidth, height=.4\textwidth,viewport=250 0 400 250, clip]{meshexp.png}
\caption{\label{N2}  Shown above is the mesh used for the NGMRES-Picard deflation method for the NSE channel expansion problem, as the entire domain (top) and zoomed in to the inlet (bottom). }
\end{center}
\end{figure}

We discretize using $(P_2,P_1^{disc})$ SV elements on a barycenter refinement of a Delaunay mesh refined more heavily near the inlet.  A plot of the mesh is shown in Figure \ref{N2}, and this discretization provides for 97K total dof.  Since the system is small, we use direct solves for the linear systems.

The procedure works well overall, and was able to find all 3 solutions that are not reflectionally symmetric of each other in 4 total runs.  The first run found the symmetric solution quickly, converging in 45 iterations, the second run found the solution at the bottom in Figure \ref{N1} in 69 iterations, the third run did not converge in 100 iterations, and the fourth run found the solution second from top in Figure \ref{N1} in 67 iterations.  Undoubtedly, our algorithm could be improved e.g by optimizing NGMRES parameters and restarts, allowing more than 100 iterations at each run, or by using smarter Newton algorithms (we used classical, unstabilized Newton), but still NGMRES was able to easily incorporate deflation and was quite effective.

\subsection{NGMRES for superlinearly converging iterations}
For a final test, we illustrate the potential for $m=0$ NGMRES to be used with superlinearly converging iterations.  NGMRES (and AA) is likely to help a superlinear solver at early iterations, before the iterates are close enough for superlinear convergence to kick in.  Once the iterates get close to a root, how NGMRES affects convergence is not completely understood.  However, our theory above in Theorem \ref{thm:ngmres0} shows that the higher order terms are quadratic, which suggests that $m=0$ NGMRES will not slow down an order $1<r\le 2$ iteration. While this is not a proof, NGMRES theory does compare well with depth 1 AA theory, whose fixed point residual bound has higher order terms that are less than quadratic, and furthermore AA is known to reduce convergence order $r>1$ to $\frac{r+1}{2}$ \cite{X23,RX23}.

We now test convergence of $m=0$ NGMRES and depth 1 AA (using the standard AA from e.g. \cite{PR25}) on Newton and Secant iterations to solve $g(x) = x^2 -x-2 =0$:
\[
q_N(x) =  x - \frac{g(x)}{g'(x)}, \ \  
q_S([x,y]) = [x - \frac{g(x)}{ \frac{g(x)-g(y)}{x-y}},x].
\]
Using a 1D test allows us to focus on the higher order term effects.

Results are shown in Figure \ref{S1} for initial guess $[0,100]$ and stopping tolerance of $10^{-10}$ for the nonlinear residual.  For both tests, we observe that NGMRES improves convergence, while AA slows it down.  The improvement from NGMRES comes early in the iteration, as expected, but NGMRES does not appear to slow down the superlinear asymptotic convergence rates of the Newton and Secant iterations.  While this is a simple 1D test, it suggests that further study of NGMRES applied to superlinearly converging methods is warranted.

\begin{figure}[h!]
\centering
\includegraphics[scale = 0.35]{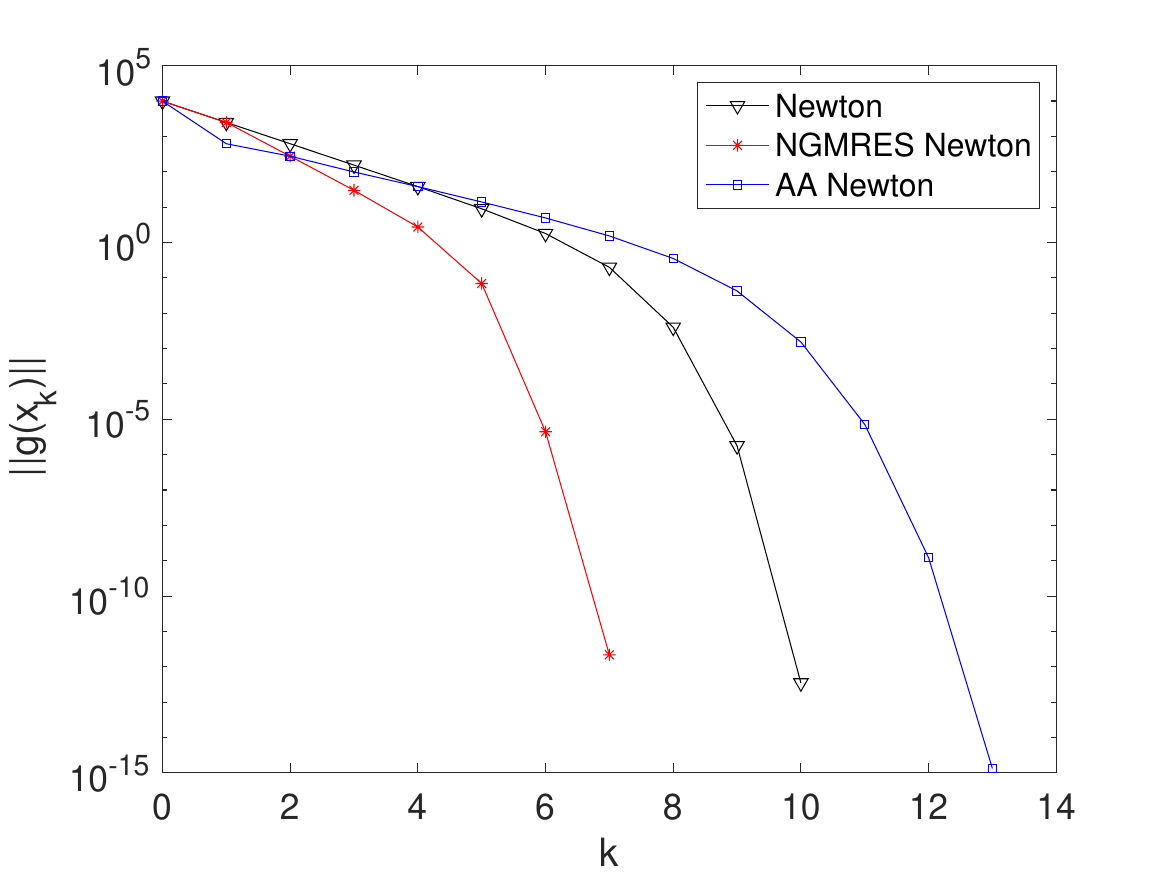}
\includegraphics[scale = 0.35]{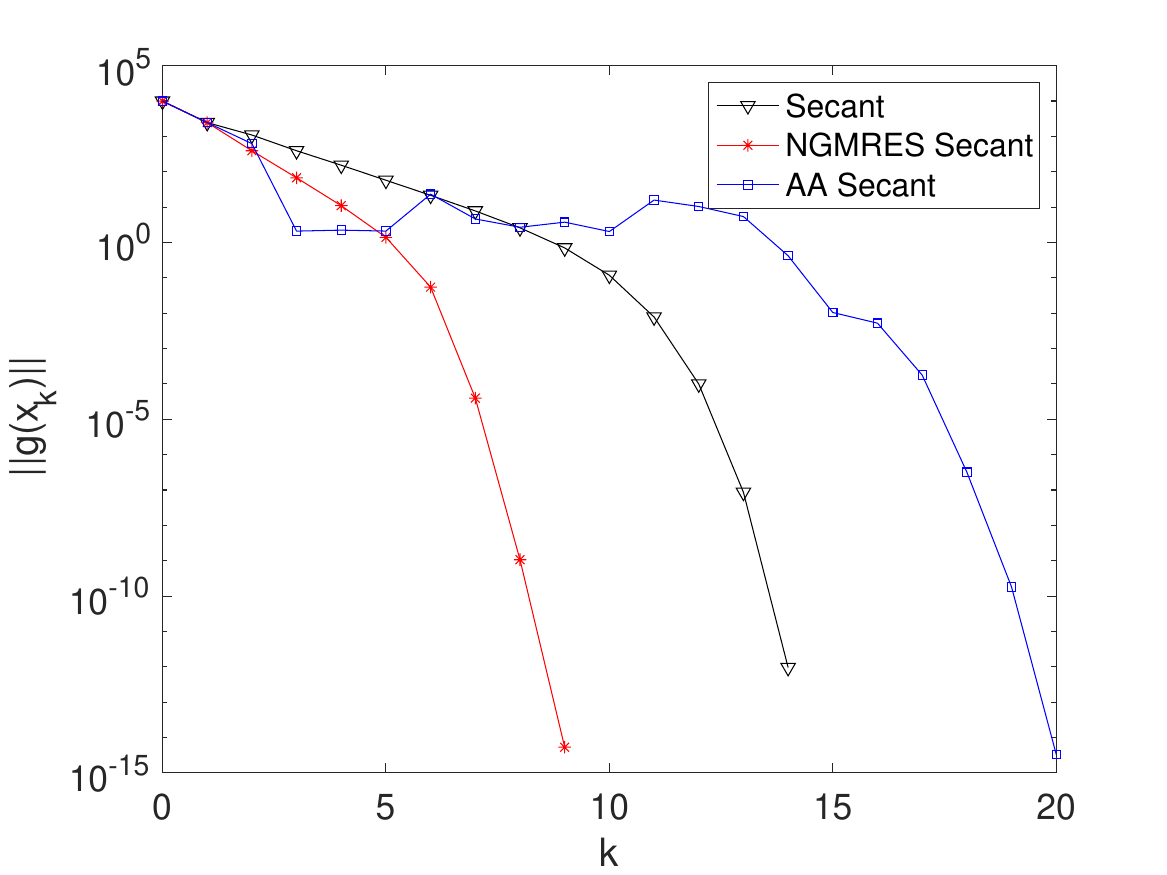}
\caption{\label{S1} Shown above are convergence plots for Newton and Secant iterations, along with NGMRES and AA applied.}
\end{figure}

\section{Conclusions}

This paper has provided the first proof of NGMRES acceleration for contractive and noncontractive fixed point iterations used to solve general nonlinear systems.  We have established the gain of the optimization problem as the mechanism responsible for the acceleration, and have also identified an additional quantity arising from the optimization problem that accurately predicts the linear convergence rate of the nonlinear residuals at each step.  The newly identified quantity, since it is at most 1, also shows the NGMRES only reduces the nonlinear residual (up to higher order terms).  Several numerical tests illustrate the theory and also show how NGMRES can be improved for certain problems by using restarts and/or an adaptive depth strategy based on our newly discovered linear convergence rate predictor .  Additionally, we have shown that NGMRES has the potential to improve superlinearly converging iterations (an advantage over AA).

For future work, we believe there are several important directions.  First, further improvements can be made to the adaptive depth strategy given above, e.g. by considering what is the best depth between $1$ and $m$ to use at any given iteration, based on weighing higher order term effects against improvement in the linear convergence rate.  Second, NGMRES appears to help with superlinearly converging iterations (as opposed to AA, which makes them worse); this should be explored both analytically and numerically.  Third, more documented comparisons of AA and NGMRES need performed to better classify when one is better than the other; the analysis herein may give some hints in this direction.  Finally, it may be possible that our result for the noncontractive case with $m\ge 1$ may be improvable in the higher order terms, since our NLH tests found convergence sometimes after 500 iterations.


\end{document}